\definecolor{rojo}{rgb}{1,0,0}
\definecolor{abelian}{cmyk}{0.50,0,1,.4}
\definecolor{noabelian}{cmyk}{0.94,0.54,0,0}
\definecolor{rojo}{cmyk}{0,1,1,0}
\definecolor{verde}{cmyk}{0.91,0,0.88,0.12}
\newcommand{\customlabel}[2]{%
   \protected@write \@auxout {}{\string \newlabel {#1}{{#2}{\thepage}{#2}{#1}{}} }%
   \hypertarget{#1}{#2}
}
\newtheorem{thm}{Theorem}
\newtheorem{lem}[thm]{Lemma}
\newtheorem{prop}[thm]{Proposition}
\newtheorem{example}{Example}[section]
\theoremstyle{definition}
\newtheorem{defn}[thm]{\textbf{Definition}}
\theoremstyle{definition}
\theoremstyle{remark}
\theoremstyle{Notation}
\newcommand{\op}{\operatorname}
\newcommand {\CC} {\mathbb{C}}
\newcommand{\ben}{\begin{equation}}
\newcommand{\een}{\end{equation}}
\newcommand{\bena}{\begin{equation*}}
\newcommand{\eena}{\end{equation*}}
\newcommand{\ma}{\mathcal}
\newcommand{\ZZ}{\mathbb{Z}}
\newcommand{\Tot}{\longmapsto}
\def\RR{\mathbb{R}}
\def\ZZ{\mathbb{Z}}
\def\FF{\mathbb{F}}
\title{Extending free actions of finite groups on unoriented surfaces}
\author{
Omar A. Cruz\thanks{Centro de Investigaci\'on en Matem\'aticas, M\'exico. 
{\em e-mail: }{\tt omar.cruz@cimat.mx}},
Gustavo Ortega \thanks{Universidad Nacional Aut\'onoma de M\'exico, M\'exico. 
{\em e-mail: }{\tt gustavo.ortega@ciencias.unam.mx}}
and 
Carlos Segovia\thanks{Instituto de Matem\'aticas UNAM-Oaxaca,  M\'{e}xico. {\em e-mail: }
{\tt csegovia@matem.unam.mx}}
}
\begin{document}
\maketitle

\begin{abstract}
We present the unoriented versions of the Schur and Bogomolov multipliers associated with a finite group $G$.
We show that the unoriented Schur multiplier is isomorphic to the second cohomology group $H^2(G;\ZZ_2)$.
We define the unoriented Bogomolov multiplier as the quotient of the unoriented Schur multiplier by the subgroup generated by classes over the disjoint union of tori, Klein bottles, and projective spaces.
We prove that the unoriented Bogomolov multiplier is trivial for abelian, dihedral, symmetric, and alternating groups. Since $H^2(G;\ZZ_2)$ is trivial for any group of odd order, there are numerous examples where the classical Bogomolov multiplier is nontrivial while its unoriented counterpart is trivial. Nevertheless, we exhibit a group of order $64$ for which the unoriented Bogomolov multiplier is nontrivial.
\end{abstract}

\section*{Introduction}
\label{intro}
The Schur and Bogomolov multipliers have been of great interest because of the solution in \cite{ASSU23} of the following question: Do all free actions on oriented surfaces equivariantly bound?
There is an affirmative answer in \cite{RZ96,DS22} for abelian, dihedral, symmetric, and alternating groups. However, we can find counterexamples in \cite{Sam22} for finite groups with non-trivial Bogomolov multiplier. 
It was shown in \cite{ASSU23} that a finite free action on a closed, oriented surface extends to a (not necessarily free) action on a $3$-manifold if and only if the corresponding class in the Bogomolov multiplier $B_0(G)$ is trivial.

\begin{thm}[\cite{ASSU23}]\label{assu}
Assume $G$ is a finite group with a free action over a closed, orientable surface $S$. This action extends to a non-necessarily free action over a $3$-manifold if and only if the class in the Bogomolov multiplier $[S, G]\in B_0(G)$ is trivial.
\end{thm}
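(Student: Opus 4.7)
The proof proceeds in two directions, connected by translating the problem into equivariant bordism of free $G$-actions on closed oriented surfaces, which is naturally isomorphic to the Schur multiplier $H_2(G;\ZZ)$, so that the Bogomolov quotient records precisely the classes that vanish modulo tori with free $G$-actions coming from homomorphisms $\ZZ^2\to G$.

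\textbf{Sufficiency.} Suppose $[S,G]=0$ in $B_0(G)$. By definition of the Bogomolov quotient, there is a free $G$-equivariant bordism $W$ from $S$ to a disjoint union $\bigsqcup_i T_{\varphi_i}$, where each $T_{\varphi_i}$ is the torus with free $G$-action induced from a homomorphism $\varphi_i\colon \ZZ^2\to G$. Each such $T_{\varphi_i}$ is of the form $G\times_{A_i}T^2$ with $A_i=\op{Im}\varphi_i$ abelian, and it bounds equivariantly the solid torus $G\times_{A_i}(S^1\times D^2)$, on which $A_i$ acts by rotation on the $D^2$ factor with fixed-point set the core circle. Gluing these solid tori to $W$ along the $T_{\varphi_i}$ yields a $3$-manifold $Y$ with $\partial Y=S$ extending the given action, non-free only along the cores.

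\textbf{Necessity.} Suppose conversely the action extends to a compact $3$-manifold $Y$ with $\partial Y=S$. After equivariant perturbation the ramification locus $\Sigma=\{y\in Y:G_y\neq 1\}$ is a finite embedded graph in the interior. Local linearization of the slice representation shows that the isotropy at a smooth point of $\Sigma$ is cyclic, while the isotropy at a vertex is a finite subgroup of $O(3)$, i.e.\ cyclic, dihedral, $A_4$, $S_4$, or $A_5$. The first reduction is the blow-up construction: at each vertex with platonic isotropy one excises an equivariant ball and glues in the equivariant cobordism of \cite{ASSU23}, trading that vertex for boundary components whose ramification loci are circles with only cyclic or dihedral isotropy. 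The second reduction cancels the dihedral circles by the equivariant handle attachments of \cite[Sec.~4]{ASSU23}. Once only cyclic circles remain, excising open equivariant tubular neighborhoods of each component of $\Sigma$ leaves a free $G$-equivariant bordism from $S$ to a disjoint union of boundary tori $T_{\varphi_i}$, each with free action induced by $\varphi_i\colon \ZZ^2\to G$. Hence $[S,G]$ is a sum of toral classes in $H_2(G;\ZZ)$, and therefore vanishes in $B_0(G)$.

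\textbf{Principal difficulty.} The hardest step is the blow-up reduction at vertices with platonic isotropy $\Gamma\in\{A_4,S_4,A_5\}$: for each possible link action of $\Gamma$ on $S^2$ realized at a vertex one must produce an explicit equivariant $3$-dimensional cobordism whose remaining boundary has only cyclic or dihedral isotropy, while verifying that no new obstruction in $B_0(G)$ is introduced. The case list is finite because the platonic actions on $S^2$ are classified, but this is where the choice of quotienting the Schur multiplier only by toral classes, rather than by all free classes, becomes essential.
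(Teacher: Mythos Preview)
The paper does not give its own proof of this theorem; it is quoted from \cite{ASSU23}, and the Introduction only summarizes that argument (blow-up at singular vertices to reduce platonic isotropy to cyclic/dihedral, then cancel dihedral points by equivariant handles, leaving a link of circles whose tubular boundaries are toral classes). Your outline matches that summary in both directions and in the ordering of the reductions.

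One point to tighten: in the necessity direction you write that the isotropy at a vertex is a finite subgroup of $O(3)$ and then list only the finite subgroups of $SO(3)$. In the oriented setting of this theorem the $3$-manifold is taken oriented and the $G$-action orientation-preserving, so the slice representation lands in $SO(3)$; this is exactly what forces the ramification locus to be a $1$-complex rather than containing fixed surfaces, and it is why only cyclic, dihedral, $A_4$, $S_4$, $A_5$ appear. You should state this hypothesis explicitly, since without it the ``graph'' claim is false (compare the paper's proof of Theorem~\ref{main}, where the full list of $14$ finite subgroups of $O(3)$ must be handled and fixed surfaces do occur). With that correction your sketch is faithful to the cited argument.
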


We raise the question in the unoriented context: Are all free actions on unoriented surfaces equivariantly bound?
The obstruction to extending by free action is given by the two-dimensional unoriented bordism group $\Omega_2(BG)\cong H_2(BG;\ZZ_2)\oplus \ZZ_2$, where the second term is represented by the trivial $G$-bundle over the projective space. 
For $G$ of odd order, we have $H_2(BG;\ZZ_2)$ is trivial, and the trivial $G$-bundle over the projective space is not extendable 
even if we take non-necessarily free actions. Nevertheless, for $G$ of even order, this trivial bundle could be extended. For instance, in the case $ G=\mathbb {Z} _ 2$, we can take the action by reflection over the middle of an arc with boundary two points, and then multiply by the projective space $\mathbb{R}P^2$.

The Schur multiplier $\ma{M}(G)$ has isomorphic interpretations in terms of the free equivariant bordism $\Omega_2^{SO}(G)$, the integral homology $H_2(G,\ZZ)$, and the cohomology of groups $H^2(G,\CC^*)$; see the book of Karpilovsky for a beautiful account \cite{Kar87}.
Miller gives an important interpretation of the Schur multiplier \cite{Mi52} using the universal commutator relations: denote by $\langle G,G\rangle$ the free group on pairs $\langle x, y\rangle$ with $x,y\in G$ and consider the kernel of the canonical map to the commutator group $\langle G,G\rangle\longrightarrow [G,G]$. There is the following isomorphism for the Schur multiplier:

\begin{thm}[\cite{Mi52}]\label{miller1}
There is an isomorphism of the Schur multiplier given by
$$\ma{M}(G)\cong \frac{\op{ker}\left(\langle G,G\rangle\longrightarrow [G,G]\right)}{N}\,,$$
where $N$ is the normal subgroup generated by the following four relations: 
 \begin{eqnarray*}
		&\left<x,x\right>&\sim 1\,,\\
	&\left<x,y\right>&\sim \left<y,x\right>^{-1}\,,\\
	&\left<xy,z\right>&\sim \left<y,z\right>^x\left<x,z\right>\,,\\
	&\left<y,z\right>^x&\sim \left<x,[y,z]\right>\left<y,z\right>\,.
\end{eqnarray*}
\end{thm}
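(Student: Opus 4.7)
The plan is to invoke Hopf's formula and realize both sides as kernels of natural maps into $[G,G]$. Fix a free presentation $1 \to R \to F \xrightarrow{\pi} G \to 1$, so that $\ma{M}(G) \cong (R \cap [F,F])/[F,R]$. I will construct mutually inverse homomorphisms between $\langle G,G\rangle/N$ and $[F,F]/[F,R]$ that intertwine the two natural maps to $[G,G]$, and then restrict to the kernels.

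First I would define $\Phi\colon \langle G,G\rangle \to [F,F]/[F,R]$ on generators by picking arbitrary lifts $\tilde x,\tilde y \in F$ of $x,y\in G$ and sending $\langle x,y\rangle \mapsto [\tilde x,\tilde y]\cdot[F,R]$. Independence of the lifts follows because two lifts differ by an element of $R$, and $[\tilde x r,\tilde y] = [\tilde x,\tilde y]^r[r,\tilde y]\equiv [\tilde x,\tilde y]\pmod{[F,R]}$. Next I would check that each of the four defining relations of $N$ becomes an identity in $[F,F]/[F,R]$: (1) $[\tilde x,\tilde x]=1$; (2) $[\tilde x,\tilde y]^{-1}=[\tilde y,\tilde x]$; (3) the standard commutator identity $[\tilde x\tilde y,\tilde z] = [\tilde y,\tilde z]^{\tilde x}[\tilde x,\tilde z]$; and (4) the tautology $[\tilde x,[\tilde y,\tilde z]]\cdot[\tilde y,\tilde z] = [\tilde y,\tilde z]^{\tilde x}$. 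Hence $\Phi$ descends to $\bar\Phi\colon \langle G,G\rangle/N\to [F,F]/[F,R]$, and by construction the composite $\langle G,G\rangle/N \xrightarrow{\bar\Phi} [F,F]/[F,R] \to [G,G]$ coincides with the canonical map $\langle G,G\rangle\to [G,G]$. Therefore $\bar\Phi$ sends $K := \ker(\langle G,G\rangle/N\to [G,G])$ into $(R\cap [F,F])/[F,R]\cong \ma{M}(G)$.

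To build the inverse $\Psi\colon [F,F]/[F,R] \to \langle G,G\rangle/N$, I would use that every $w \in [F,F]$ admits an expression $w = \prod_i [a_i,b_i]^{\epsilon_i}$, and set $\Psi(w)=\prod_i\langle \pi(a_i),\pi(b_i)\rangle^{\epsilon_i}$. The delicate and most technical step, and the main obstacle of the proof, is to verify that $\Psi$ does not depend on the chosen commutator factorization. This reduces to showing that Miller's four relations formally generate every identity that holds among commutators in the free group $F$: from (1)--(4) one derives the dual distributivity $\langle x,yz\rangle\sim \langle x,z\rangle\langle x,y\rangle^z$, the inverse rule $\langle x^{-1},z\rangle^x \sim \langle x,z\rangle^{-1}$ (obtained from (1) and (3)), and the Hall--Witt identity. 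Once well-definedness is established, $\Psi$ vanishes on $[F,R]$: setting $x=1$ in relation (3) gives $\langle 1,z\rangle\sim 1$, so $[f,r]$ with $r\in R$ maps to $\langle \pi(f),1\rangle\sim 1$. Moreover, $\Psi$ sends $R \cap [F,F]$ into $K$, because for $w\in R$ the image $\prod[\pi(a_i),\pi(b_i)]^{\epsilon_i}$ equals $\pi(w)=1$ in $[G,G]$.

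A direct check on generators then shows $\bar\Phi$ and $\Psi$ are mutually inverse: $\Psi\circ\bar\Phi$ sends $\langle x,y\rangle \mapsto [\tilde x,\tilde y]\mapsto \langle x,y\rangle$, while $\bar\Phi\circ\Psi$ returns $\prod_i [a_i,b_i]^{\epsilon_i}$ to itself by taking $a_i$ itself as the lift of $\pi(a_i)$. Restricting to kernels yields the desired isomorphism $K/N\cong \ma{M}(G)$. The routine steps---independence of lifts and verification of (1), (2), (4) as commutator identities---are bookkeeping; the real content, and where I would expect to spend most of the effort, is the combinatorial proof that relations (1)--(4) form a complete set of commutator identities modulo $[F,R]$.
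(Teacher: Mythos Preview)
The paper does not prove Theorem~\ref{miller1}; it is quoted from Miller~\cite{Mi52}. The closest the paper comes is Section~\ref{hopf}, where Miller's argument is reproduced for the unoriented analogue. That argument is organized differently from yours: rather than building explicit inverse maps, one observes that for \emph{any} central extension $1\to A\to E\xrightarrow{\eta} G\to 1$ the assignment $\langle x,y\rangle\mapsto[\bar x,\bar y]$ (with $\eta(\bar x)=x$) is well defined on $\langle G,G\rangle/N$ by centrality, and this yields an exact sequence
\[
\ma{M}(E)\longrightarrow \ma{M}(G)\longrightarrow [E,E]\cap A\longrightarrow 1.
\]
Specializing to the extension $1\to R/[F,R]\to F/[F,R]\to G\to 1$ and invoking the key lemma $\ma{M}(F)=0$ (hence $\ma{M}(F/[F,R])=0$) collapses the sequence to the Hopf formula.

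Your approach is sound and is really a repackaging of the same content. Your map $\bar\Phi$ is exactly Miller's map for that particular central extension, and the ``delicate step'' you single out---that relations (1)--(4) generate every identity among commutators in a free group---is precisely the statement $\ma{M}(F)=0$ in Miller's formulation, which is also the one nontrivial input in Miller's route. What your presentation buys is an explicit inverse $\Psi$ and hence an isomorphism $\langle G,G\rangle/N\cong [F,F]/[F,R]$ of the full groups, not just the kernels; what Miller's buys is a functorial exact sequence valid for every central extension, which is what the paper actually exploits in the unoriented setting. Either way the real work is the free-group lemma, which you correctly flag as the crux but do not carry out.
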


The unoriented case affords another type of relations since we have to consider the set of squares $S(G)=\langle x^2:x\in G\rangle$, which are the associated monodromies for the M\"obius band. 
We denote by $\bigl(G,G\bigr)$ the free group on pairs $\langle x, y\rangle$, $(x',y')$ and $(z)$, with $x,y,x',y',z\in G$. 
The elements $\langle x,y\rangle$ are the oriented pairs sent in $S(G)$ to the commutator $[x,y]$.
The elements $(x,y)$ are called the unoriented pairs that are sent in $S(G)$ to the unoriented commutator $\{x,y\}:=xyx^{-1}y$. The elements $(z)$ are the square elements sent in $S(G)$ to $z^2$. Consequently, we obtain a canonical map
$\bigl(G,G\bigl)\longrightarrow S(G)$,
defined by $\langle x,y\rangle\Tot [x,y]$, $(x,y)\Tot xyx^{-1}y$, and $(z)\Tot z^2$. The {\bf unoriented Schur multiplier} is the quotient  
$$\ma{M}(G;\ZZ_2):= \frac{\op{ker} \left(\bigl(G,G\bigl)\longrightarrow S(G)\right)}{N'}\,,$$
where $N'$ is the normal subgroup generated by the following relations: \\
 \begin{eqnarray*}
  		&(x^i)(x^j)&\sim (x^{i+j})\hspace{1cm} i,j\in\ZZ\,,\\
		&(x,xy)&\sim (x)(y),\\
	&\langle x,y\rangle&\sim (x)(x^{-1}y)(y^{-1}),\\
	&\left<xy,z\right>&\sim (y,z)^x(x,z^{-1})\,,\\
	&\left<y,z\right>^x&\sim \left<x,[y,z]\right>\left<y,z\right>\,,\\
        &(y,z)^x&\sim \left<x,\{y,z\}\right>(y,z)\,,\\
        &(y^{x},z^{x})^{-1}&\sim (x,\{y,z\}^{-1})(y,z)\,,\\
        & (z) \langle x,y \rangle &\sim \langle y,x \rangle^z (z[x,y])\,.
\end{eqnarray*}  
and two commutation relations:\\
\begin{align*}\label{conmuta1}
    (b)\langle a_0,b_0\rangle\sim\langle a_0,b_0\rangle^{b^2}(b)\,,\\ 
    \langle a_0,b_0\rangle (a)\sim (a)\langle a_0,b_0\rangle^{a^{-2}}\,.
\end{align*}
%
%
We can show that the oriented relations in Theorem \ref{miller1} are a consequence of the unoriented relations; see Theorem \ref{dador}. 
We obtain a Hopf formula as follows:
    $$\ma{M}(G;\ZZ_2)\cong \frac{R\cap{S(F)}}{[F,R]R^2}\,.$$
and we have an isomorphism $\ma{M}(G;\ZZ_2)\cong H^2(G;\ZZ_2)$.

The Bogomolov multiplier is defined as the quotient of the Schur multiplier $\mathcal{M}(G)$ by the toral classes provided by pairs of commuting elements. These toral classes are always extendable elements, as shown in \cite{DS22,Sam20}. 
Unlike the oriented case, we now have more extendable elements; see Proposition \ref{prC1}. More precisely, in addition to the toral classes, we have free actions whose quotient is a Klein surface, which is extendable for the same reasons as the torus, as well as principal $G$-bundles over the projective space $\RR P^2$ induced by elements $x\in G$ with $x^2=1$ and $x\neq 1$.
By dropping the subgroup generated by the trivial $G$-bundle over $\RR P^2$, we define {\bf the unoriented Bogomolov multiplier}, which we denote by $B_{0}(G;\ZZ_2)$, as the quotient of the unoriented Schur multiplier $\ma{M}(G;\ZZ_2)$ by the subgroup generated by free actions where the quotient can be a torus, a Klein bottle, or a projective space $\RR P^2$.

We show that the unoriented Bogomolov multiplier is trivial for finite abelian, dihedral, symmetric, and alternating groups. 
The most important contribution of the present paper is an example of a group where the unoriented Bogomolov multiplier is not trivial: this is a group of order $64$ which is the first group with non-trivial Bogomolov multiplier; see Example \ref{64}. 
In the forthcoming work \cite{Seg}, we will prove Theorem \ref{no-proof}, which therefore implies that such a group acts in a freely way that does not equivariantly bound, even if we allow for non-free and unoriented preserving actions.

\section*{Acknowledgment} 
We would like to thank Eric Samperton and Bernardo Uribe for relevant conversations regarding this work. 
We also thank the financial support of ``Convocatoria PAEP Posgrado de Matem\'aticas de la UNAM 2023." The third author is supported by Investigadores por M\'exico SECIHTI. We thank the anonymous reviewer for the careful reading and valuable suggestions that helped improve the manuscript.

\section{$G$-cobordism}
\label{Gcob}
A $G$-cobordism is a cobordism class of principal $G$-bundles over surfaces. 
Formally, a cobordism between two $d$-dimensional manifolds $\Sigma$ and $\Sigma'$ is represented by a $(d+1)$-dimensional manifold $M$ whose boundary satisfies $\partial M \cong \Sigma \sqcup \Sigma'$ (or $\partial M \cong \Sigma \sqcup -\Sigma'$ in the oriented case). Two cobordisms $(\Sigma, M, \Sigma')$ and $(\Sigma, M', \Sigma')$ are considered equivalent if there exists a diffeomorphism $\Phi \colon M \longrightarrow M'$ that makes the following diagram commute:
$$\xymatrix{&M\ar[dd]^\Phi&\\\Sigma \ar[ru]\ar[rd]&&\Sigma'\ar[lu]\ar[ld]\\ &M'&\,.}$$
Extending this notion to the $G$-equivariant setting is straightforward. This is done in \cite{Seg23,DS22}, where a cobordism of principal $G$-bundles $[P, Q, P']$ projects $G$-equivariantly onto a cobordism $[\Sigma, M, \Sigma']$, with $P \to \Sigma$, $Q \to M$, and $P' \to \Sigma'$ being principal $G$-bundles that agree on the restrictions and are compatible with the $G$-actions.

In dimension two, for oriented surfaces, every $G$-cobordism can be constructed by gluing principal $G$-bundles over three elementary surfaces: the cylinder, the pair of pants, and the disc. In the unoriented case, one must also include $G$-bundles over the M\"obius strip. We will describe each of these building blocks in detail, but first, we recall the classification of principal $G$-bundles over the circle.

Let $x \in G$. Consider the product space $[0,1] \times G$ with ends identified via multiplication by $x$, i.e., $(0, y) \sim (1, xy)$ for all $y \in G$. The resulting space is denoted $P_x$. Every principal $G$-bundle over the circle is isomorphic to some $P_x$, and $P_x \cong P_{x'}$ if and only if $x'$ is conjugate to $x$. The projection $P_x \to S^1$ is given by the first coordinate, and the $G$-action is right multiplication on the second coordinate. Throughout this paper, we refer to $x$ as the \textbf{monodromy} of $P_x$. Note that $P_1$ corresponds to the trivial bundle.

The elementary $G$-cobordisms for unoriented surfaces are as follows:

\begin{itemize}
\item \textbf{Cylinder.} Every principal $G$-bundle over the cylinder, with incoming monodromy $x$ and outgoing monodromy $x'$, corresponds to an element $y \in G$ such that $x' = y x y^{-1}$.

\item \textbf{Pair of pants.} For $x,y\in G$, we consider the principal $G$-bundle over the pair of pants, which is a $G$-deformation retract\footnote{By a $G$-deformation retract we mean a homotopy realized through principal $G$-bundles.} of a principal G-bundle over the wedge $S^1\vee S^1$. The incoming boundaries carry monodromies $x$ and $y$, while the outgoing boundary has monodromy $xy$.

\item \textbf{Disc.} There is a unique $G$-cobordism over the disc, given by the trivial bundle.

\item \textbf{M\"obius strip.} For $x \in G$, the middle circle of the M\"obius strip carries monodromy $x$, and the boundary circle carries monodromy $x^2$.

\end{itemize}
Figure \ref{figr1} illustrates these $G$-cobordisms. Circles are labeled with their corresponding monodromy, and for cylinders, the interior indicates the conjugating element.
\begin{figure}
\centering
\includegraphics[scale=0.9]{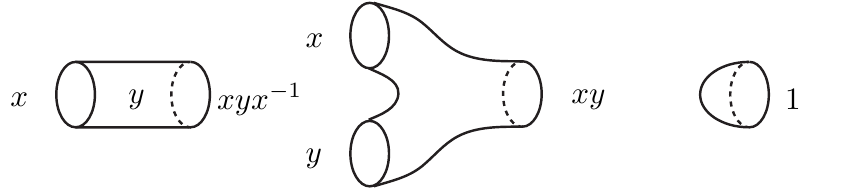}
\caption{Elementary $G$-cobordisms over the cylinder, pair of pants, and disc.}
\label{figr1}
\end{figure}
Figure \ref{figr2} depicts the $G$-cobordism over the M\"obius strip and its schematic representation in diagrams.

\begin{figure}
    \centering
    \includegraphics[scale=0.5]{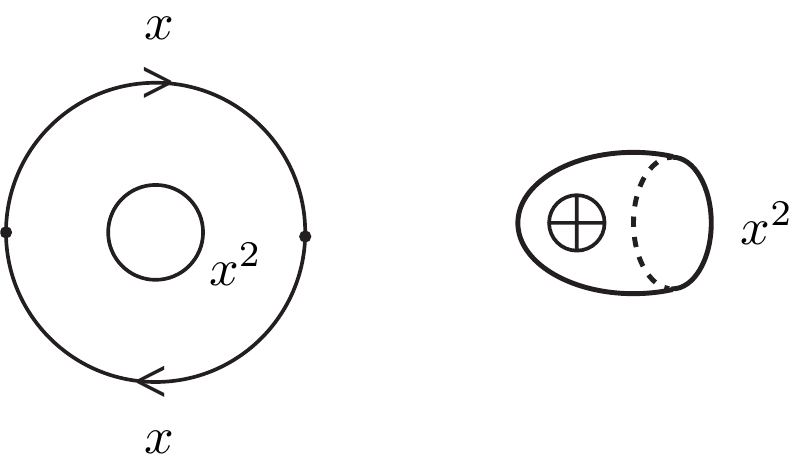}
    \caption{A $G$-cobordism over the M\"obius strip.}
    \label{figr2}
\end{figure}

\begin{example}[The projective space $\RR P^2$]
Recall that $\mathbb{R} P^2$ with an open disc removed is a M\"obius strip. Let $x \in G$ be the monodromy along the middle circle of the M\"obius strip. Capping its boundary with a disc (which carries only the trivial $G$-bundle) forces $x^2 = 1$. Figure \ref{figr3} illustrates how a $G$-cobordism over $\mathbb{R} P^2$ is obtained from a M\"obius strip and a disc.
\begin{figure}
    \centering
    \includegraphics[scale=0.15]{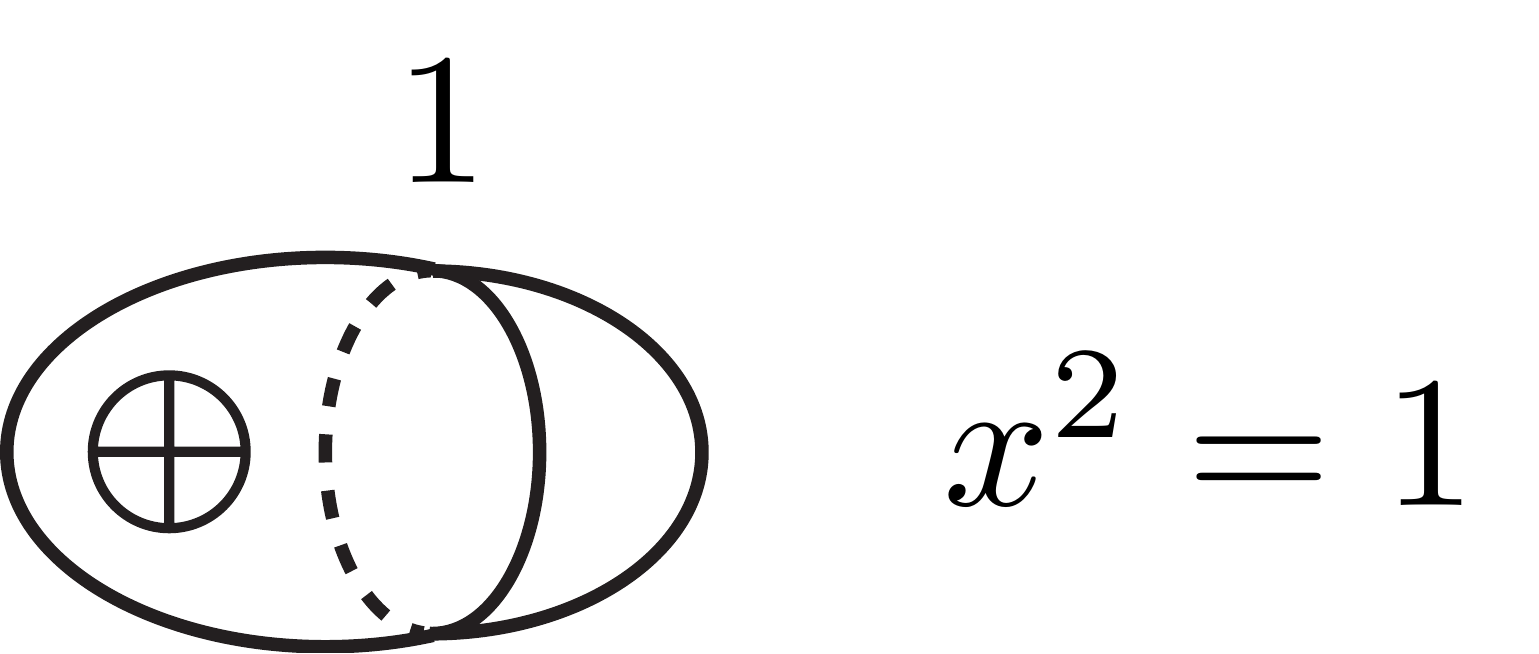}
    \caption{A $G$-cobordism over $\RR P^2$.}
    \label{figr3}
\end{figure}
\end{example}

\begin{example}[A handle of genus $n$] 
As in \cite{DS22}, a $G$-cobordism over a handlebody of genus $n$ with one boundary circle depends on elements $x_i, y_i \in G$ for $1 \leq i \leq n$, with boundary monodromy $\prod_{i=1}^n [x_i, y_i]$. Figure \ref{figr4} shows two equivalent representations for genus one.
\begin{figure}
    \centering
   \includegraphics[scale=0.9]{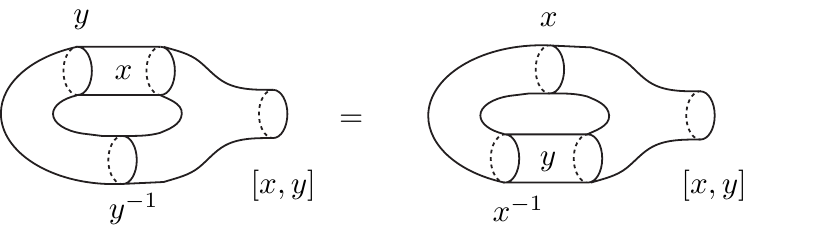}
    \caption{Two equivalent representations of a $G$-cobordism over a handle of genus one.}
    \label{figr4}
\end{figure}
\end{example}

\begin{example}[The Klein bottle with a boundary circle]
A $G$-cobordism over a Klein bottle depends on two elements $x, y \in G$, with boundary monodromy $\{x, y\} := x y x^{-1} y$. Two equivalent representations are shown in Figure \ref{figr5}.
\begin{figure}
    \centering
    \includegraphics[scale=0.9]{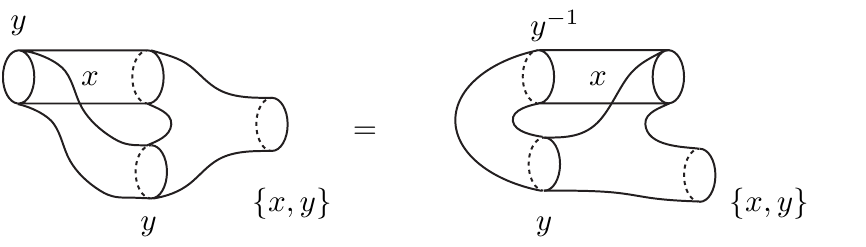}
    \caption{Two equivalent representations of a $G$-cobordism over a Klein bottle.}
    \label{figr5}
\end{figure}

\end{example}

Certain relations hold among $G$-cobordisms over the M\"obius strip and the Klein bottle. For instance, Figure \ref{figr6} shows that a Klein bottle $G$-cobordism decomposes as the union of two M\"obius strip $G$-cobordisms.

\begin{figure}
    \centering
    \includegraphics[scale=0.9]{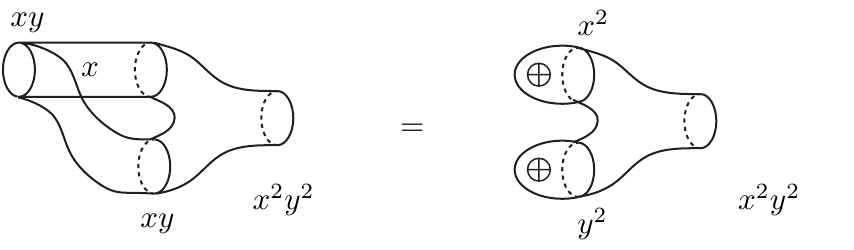}
    \caption{The Klein bottle as the union of two M\"obius strips.}
    \label{figr6}
\end{figure}

We now use the notion of $G$-cobordism to characterize when a free $G$-action on a surface bounds equivariantly.

\begin{defn}
A two-dimensional $G$-cobordism $[Q \to M]$ with $\partial Q = \partial M = \emptyset$ is called \textbf{extendable} if there exists a compact $3$-manifold $W$ with $\partial W \cong Q$ and an action of $G$ on $W$ that extends the action on $Q$.
\end{defn}

Examples of extendable $G$-cobordisms include trivial bundles over orientable surfaces, which extend to solid handlebodies. A non-example is the trivial bundle over $\mathbb{R} P^2$ when $G$ is of odd order. The following proposition generalizes extension constructions to certain unoriented surfaces.

\begin{prop}\label{prC1}
Every $G$-cobordism whose quotient is a torus or a Klein bottle is extendable. Moreover, any $G$-cobordism over $\mathbb{R} P^2$ associated with an element $x \in G$ satisfying $x^2 = 1$ and $x \neq 1$ is also extendable.
\end{prop}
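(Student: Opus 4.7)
The plan is to exhibit in each of the three cases an explicit $G$-equivariant $3$-manifold $W$ with $\partial W$ equal to the total space of the given $G$-bundle and carrying a $G$-action that extends the free action on the boundary. The guiding principle I will use is: if $H\leq G$ denotes the subgroup generated by the monodromies and $W_H$ is an $H$-equivariant $3$-manifold whose boundary is the $H$-stabilized component of the total space, then $W:=G\times_H W_H$ produces the desired filling; interior fixed points of the $H$-action on $W_H$ are permitted since the definition of extendability does not require freeness of the $G$-action on $W$.

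For the torus, the relation $[x,y]=1$ makes $H=\langle x,y\rangle$ a finite abelian group with at most two generators, hence there is an embedding $H\hookrightarrow U(1)\times U(1)$. The relevant component of the total space is a torus on which $H$ acts freely by translations via this embedding, and the translation action extends to $D^2\times S^1$ by rotating the two factors (with fixed set contained in the central circle $\{0\}\times S^1$); one takes $W_H=D^2\times S^1$. For the projective space with $x^2=1$ and $x\neq 1$, the total space is $G\times_{\langle x\rangle}S^2$, a disjoint union of $|G|/2$ spheres on which $\langle x\rangle\cong\ZZ_2$ acts antipodally; the antipodal action extends to $D^3$ with the origin as the unique interior fixed point, so $W:=G\times_{\langle x\rangle}D^3$ gives the extension.

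The hard part will be the Klein bottle. A naive filling by the bounded twisted $D^2$-bundle over $S^1$ kills the fiber monodromy $y$ and only works when $y=1$. Instead, I would present $K$ as the mapping torus of the reflection $\tau(z)=\bar z$ on $S^1$, so that the $G$-bundle over $K$ becomes the mapping torus of a $G$-equivariant bundle map $\phi\colon P_y\to P_y$ covering $\tau$, where $P_y$ is the $G$-bundle over $S^1$ of monodromy $y$. Explicitly, on the covering $\mathbb{R}\times G$ one takes $\phi(t,g)=(-t,gx)$, whose well-definedness on $P_y$ reduces to the identity $yx=xy^{-1}$, which is a restatement of the Klein bottle relation $xyx^{-1}=y^{-1}$. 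To extend to a $3$-manifold, I fill each fiber circle of $P_y$ with a disk by replacing $P_y$ with $F:=G\times_{\langle y\rangle}D^2$, and extend $\phi$ to $\phi'[g,z]=[gx,\bar z]$ on $F$; the same identity $yx=xy^{-1}$ guarantees $\phi'$ descends to $F$. The mapping torus of $\phi'$ then gives a $3$-manifold $W$ whose boundary is the given total space and whose $G$-action extends the boundary action, with interior fixed points along the ``spine'' circles formed by the disk centers.
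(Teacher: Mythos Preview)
Your argument is correct and follows the same geometric idea as the paper: in each case one fills a circle direction of the total space with disks (producing a solid torus, a solid Klein bottle, or a union of $3$-balls) and lets the monodromy act by rotation, leaving the core as the interior fixed set---exactly what the paper calls the ``radial construction.'' The paper dispatches the Klein bottle in a single sentence (``not so different''), so your explicit mapping-torus model $W=\bigl(G\times_{\langle y\rangle}D^2\bigr)\times[0,1]/\phi'$ is a welcome unpacking of that same solid Klein bottle rather than a genuinely different route; the only quibble is that with the paper's conventions (right $G$-action, left monodromy) the map should read $\phi(t,g)=(-t,xg)$ rather than $(-t,gx)$, and correspondingly the well-definedness check yields $xyx^{-1}=y^{-1}$ directly.
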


\begin{proof}
The torus case is treated in \cite{Sam20,DS22} via radial extension to a solid handlebody. One places the initial $G$-cobordism at radius $r = 1$ and shrinks it continuously as $r \to 0$, ending at a branched circle. The Klein bottle case is analogous, but the extension yields an unoriented $3$-manifold.

For $\mathbb{R} P^2$, observe that removing a disc yields a M\"obius strip. Restrict the given $G$-cobordism to this M\"obius strip, and let $x \in G$ be the monodromy along its middle circle, where $x^2 = 1$. For $x \neq 1$, the total space of the bundle over the M\"obius strip consists of $|G|/2$ cylinders, each doubly covering the strip. Capping each cylinder with two discs along its boundaries yields $|G|/2$ spheres, which together constitute a principal $G$-bundle over $\mathbb{R} P^2$; see Figure \ref{Mobius}.

\begin{figure}
    \centering
    \includegraphics[scale=0.7]{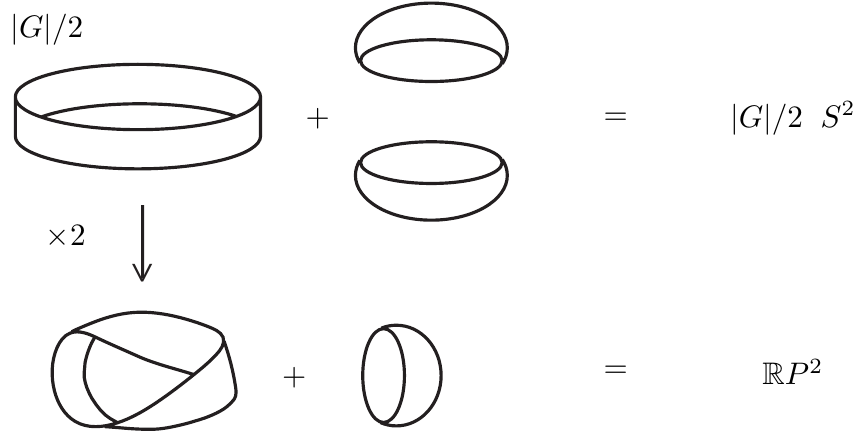}
    \caption{The bundles over $\RR P^2$ with monodromy $x\neq 1$.}
    \label{Mobius}
\end{figure}

The bundle in Figure \ref{Mobius} extends radially to a $3$-manifold formed by $|G|/2$ closed $3$-balls, whose boundaries are the spheres just described. The configuration is preserved for each radius $r > 0$, and the centers of the balls are fixed points of the $G$-action.
\end{proof}

An immediate consequence is the case of abelian groups.

\begin{thm}\label{abeliano}
If $G$ is abelian, then every $G$-cobordism over an unoriented surface is extendable. 
\end{thm}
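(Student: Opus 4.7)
The plan is to combine the classification of closed unoriented surfaces with Proposition~\ref{prC1}, exploiting the abelian hypothesis to eliminate commutators. By the classification theorem, together with the homeomorphism $T^2 \# \RR P^2 \cong K \# \RR P^2$, every closed unoriented surface decomposes as a connect sum of tori, Klein bottles, and at most one copy of $\RR P^2$. The strategy is to decompose the $G$-bundle along separating circles compatibly with this topological decomposition, then extend each piece using Proposition~\ref{prC1} and glue.

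For the orientable summands, the $G$-bundle is specified by pairs $(x_i,y_i)$ satisfying $\prod [x_i,y_i]=1$. Since $G$ is abelian, each commutator vanishes individually, so the separating circle isolating each torus summand carries trivial monodromy. Capping these circles with trivial disc bundles realizes the $G$-bundle as a disjoint union of $g$ torus bundles, each extendable by Proposition~\ref{prC1}, and the extensions glue along 3-balls over the capping discs.

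For the non-orientable summands I would pair the cross-caps $(z_{2i-1},z_{2i})$ into Klein bottle pieces, together with a residual $\RR P^2$ when the number of cross-caps is odd. Although the separating circles between consecutive Klein bottle pieces may carry non-trivial monodromies $z_{2i-1}^2 z_{2i}^2$, the abelian assumption greatly simplifies the relations of $\ma{M}(G;\ZZ_2)$: one has $\{y,z\}=yzy^{-1}z=z^2$, and the seventh relation $(y,z)^x \sim (y,z)\langle x,\{y,z\}\rangle$ reduces each Klein bottle piece modulo torus classes to a closed Klein bottle bundle extendable by Proposition~\ref{prC1}. The residual $\RR P^2$, if its monodromy is a non-trivial 2-torsion element, extends by Proposition~\ref{prC1}; if the monodromy is trivial then the first relation $(1)(1)\sim(1)$ forces the class $(1)$ to equal $0$ in $\ma{M}(G;\ZZ_2)$, so the piece contributes trivially to the algebraic obstruction.

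The main obstacle will be translating this algebraic reduction in $\ma{M}(G;\ZZ_2)$ into a genuine geometric $G$-cobordism, so that the 3-manifold extensions from Proposition~\ref{prC1} assemble consistently along the separating 3-balls to produce a global extension of the original $G$-action. Once this gluing is carried out, the theorem follows by iterating over the summands of the surface decomposition.
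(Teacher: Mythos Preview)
Your overall strategy---decompose, then apply Proposition~\ref{prC1} to each piece---matches the paper's, but you have chosen the wrong surface decomposition, and this is precisely what creates the ``main obstacle'' you flag at the end. The paper does not pair cross-caps into multiple Klein-bottle summands. Instead it uses the normal form in which a connected non-orientable surface of genus $n$ is written as a connected sum of $\lfloor (n-1)/2\rfloor$ tori with a \emph{single} $\RR P^2$ (for $n$ odd) or a \emph{single} Klein bottle (for $n$ even). With this decomposition there is exactly one separating circle, and its monodromy is the product $\prod_i[x_i,y_i]$ coming from the torus summands---automatically trivial when $G$ is abelian. One caps that circle with two discs and applies Proposition~\ref{prC1} directly to each piece; no identities in $\ma{M}(G;\ZZ_2)$ enter the argument, and there is nothing to translate back into geometry. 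Your route, by contrast, produces separating circles with monodromies $z_{2i-1}^2 z_{2i}^2$ that need not vanish even for abelian $G$, and then tries to repair this algebraically after the fact.

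Your handling of the residual $\RR P^2$ also contains a genuine error. That $(1)\sim 1$ in $\ma{M}(G;\ZZ_2)$ is a statement about the unoriented Schur multiplier; it does not say that the trivial $G$-bundle over $\RR P^2$ bounds equivariantly. The paper's introduction singles out exactly this bundle as the basic counterexample to unrestricted extendability, and Proposition~\ref{prC1} explicitly assumes $x\neq 1$ in the projective-space case. So ``contributes trivially to the algebraic obstruction'' is not the same as ``is extendable.'' (The paper's own proof also passes over this point silently when invoking Proposition~\ref{prC1}, so the statement should really be read modulo this exceptional class; but your argument makes the conflation explicit.)
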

\begin{proof}
The oriented case is established in \cite[Theorem 8]{DS22}. For non-orientable surfaces, we recall their classification: they are either the connected sum of a projective plane with $(n-1)/2$ tori (for $n$ odd), or the connected sum of a Klein bottle with $(n-2)/2$ tori (for $n$ even). In a $G$-cobordism over such a surface, the monodromy along the curve separating the tori from the projective plane or Klein bottle lies in the commutator subgroup of $G$, which is trivial since $G$ is abelian. Cutting along this curve and capping with discs separates the tori from the remaining piece. Applying Proposition \ref{prC1} completes the proof.
\end{proof}

\section{The unoriented Schur multiplier}
\label{schur}

Issai Schur \cite{Sch04} introduced a mathematical object to classify projective representations $\rho \colon G \longrightarrow \operatorname{Gl}(n,\mathbb{C})/Z$, where $Z$ denotes the center of scalar matrices, using the quotient of $2$-cocycles by the group of $2$-coboundaries. Initially termed the Multiplikator of $G$, it is now known as the Schur multiplier $\mathcal{M}(G)$. We refer to \cite{Wi81} for a historical overview and discussion. Over time, this object acquired interpretations in terms of group homology and cohomology \cite{Br82}, and it later became significant in low-dimensional topology, where it classifies free actions of finite groups on closed oriented surfaces (see \cite{Ed82,Ed83}). In summary, $\mathcal{M}(G)$ can be identified with three equivalent groups: the second homology group $H_2(G,\mathbb{Z})$, the second cohomology group $H^2(G,\mathbb{C}^*)$, and the second free oriented bordism group $\Omega_2^{SO}(G)$.

Clair Miller, a student of Spanier, described the Schur multiplier $\mathcal{M}(G)$ in terms of universal commutator relations \cite{Mi52}. Let $\langle G,G\rangle$ denote the free group on pairs $\langle x, y \rangle$ with $x, y \in G$, and let $K := \ker\bigl( \langle G,G\rangle \to [G,G] \bigr)$ be the kernel of the canonical map to the commutator subgroup. The Schur multiplier $\mathcal{M}(G)$ is isomorphic to the quotient of $K$ by the normal subgroup generated by the following four relations:

\setcounter{equation}{0}
 \begin{eqnarray}\label{four1}
		&\left<x,x\right>&\sim 1\,,\\\label{four2}
	&\left<x,y\right>&\sim \left<y,x\right>^{-1}\,,\\\label{four3}
	&\left<xy,z\right>&\sim \left<y,z\right>^x\left<x,z\right>\,,\\\label{four4}
	&\left<y,z\right>^x&\sim \left<x,[y,z]\right>\left<y,z\right>\,,
\end{eqnarray}
where $x,y,z\in G$ and $\langle y,z\rangle^x=\langle y^x,z^x\rangle = \langle xyx^{-1}, xzx^{-1}\rangle$.

Miller also derived several further relations from these, which we list in the following theorem.
\begin{thm}[\cite{Mi52}]
The following relations can be deduced from \eqref{four1}-\eqref{four4}:
 \begin{eqnarray}\label{four5}
 &\left<x,yz\right>&\sim \left<x,y\right>\left<x,z\right>^y\,,\\\label{four6}
 &\left<x,y\right>^{\left<a,b\right>}&\sim \left<x,y\right>^{[a,b]}\,,\\\label{four7}
 & \left[\left<x,y\right>,\left<a,b\right>\right]&\sim \left<[x,y],[a,b]\right>\,,\\\label{four8}
 & \left<b,b'\right>\left<a_0,b_0\right> &\sim \left<[b,b'],a_0\right>\left<a_0,[b,b']b_0\right>\left<b,b'\right>\,,\\\label{four9}
 & \left<b,b'\right>\left<a_0,b_0\right> &\sim \left<[b,b']b_0,a_0\right>\left<a_0,[b,b']\right>\left<b,b'\right>\,,\\\label{four10}
 & \left<b,b'\right>\left<a,a'\right> &\sim \left<[b,b'],[a,a']\right>\left<a,a'\right>\left<b,b'\right>\,,\\\label{four11}
 &\left<x^n,x^s\right> &\sim 1\hspace{1cm}n=0,\pm1,\cdots;s=0,\pm1,\cdots\,,
 \end{eqnarray}for $x,y,z,a,b,a',b',a_0,b_0\in G$.
\end{thm}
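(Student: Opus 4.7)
The plan is to derive (5)--(11) sequentially from the four axioms (1)--(4). Relations (5) and (11) are routine consequences of the axioms and are handled at the outset; (6) is the central step; and once (6) is available, (7) and (10) reduce to simple rewrites while the product-swap identities (8), (9) fall out by expanding both sides using (3), (4), (5), and the cancellation $\langle [b,b'],a_{0}\rangle\langle a_{0},[b,b']\rangle\sim 1$ from (2). I expect (6) to be the main obstacle.

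For (5), I would apply (2) to flip $\langle x,yz\rangle\sim\langle yz,x\rangle^{-1}$, expand via (3) as $\langle yz,x\rangle\sim\langle z,x\rangle^{y}\langle y,x\rangle$, invert, and re-apply (2) to each factor. For (11), I would first show $\langle 1,z\rangle\sim 1$ by plugging $x=y=1$ into (3), which yields $\langle 1,z\rangle\sim\langle 1,z\rangle^{2}$; next derive the formula for $\langle x^{-1},z\rangle$ by applying (3) to the product $x\cdot x^{-1}=1$; and finally induct on $|n|+|s|$, with base case (1) and with inductive step using (3) together with (5).

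The hard step is (6). Using (4) on the right-hand side, (6) is equivalent to
\bena
\langle a,b\rangle\,\langle x,y\rangle\,\langle a,b\rangle^{-1}\sim\langle [a,b],[x,y]\rangle\,\langle x,y\rangle.
\eena
My plan is to expand the left-hand side by repeatedly applying (3) to split pair generators involving $ab$ and $ba$, and (4) to convert each resulting conjugation by an element of $G$ into a correction term of the form $\langle g,[x,y]\rangle$. With careful bookkeeping using (2), (5), and (11), the accumulated corrections telescope and collapse to the single term $\langle [a,b],[x,y]\rangle$. I expect this to be the technically heaviest step of the proof, essentially because it encodes the compatibility between conjugation in the ambient free group on pairs and the $G$-action on generators.

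Once (6) is in hand, (7) follows immediately from the identity $[\langle x,y\rangle,\langle a,b\rangle]=\langle x,y\rangle(\langle x,y\rangle^{\langle a,b\rangle})^{-1}$: replace the conjugate via (6), expand via (4), and flip via (2) to produce $\langle [x,y],[a,b]\rangle$. Relation (10) is then just (7) rearranged as a product identity. For (8) and (9), I would move $\langle b,b'\rangle$ past $\langle a_{0},b_{0}\rangle$ to get $\langle a_{0},b_{0}\rangle^{\langle b,b'\rangle}\sim \langle a_{0},b_{0}\rangle^{[b,b']}\sim \langle [b,b'],[a_{0},b_{0}]\rangle\langle a_{0},b_{0}\rangle$ by (6) and (4); expanding the target right-hand sides by distributing $[b,b']$ into the second slot via (5) in case (8), or into the first slot via (3) in case (9), followed by cancelling $\langle [b,b'],a_{0}\rangle\langle a_{0},[b,b']\rangle$ via (2), yields the same expression. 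The remaining manipulations are routine bookkeeping.
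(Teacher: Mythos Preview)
The paper does not supply its own proof of this theorem: it is stated with attribution to Miller \cite{Mi52} and no argument is given in the text. There is therefore nothing in the paper to compare your proposal against. Your outline is broadly consistent with how Miller's relations are usually derived, and the reductions you describe for (5), (7)--(11) are standard and correct.

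One caution about your plan for (6). You propose to ``expand the left-hand side by repeatedly applying (3) to split pair generators involving $ab$ and $ba$'', but the left-hand side $\langle a,b\rangle\,\langle x,y\rangle\,\langle a,b\rangle^{-1}$ contains the \emph{free-group generator} $\langle a,b\rangle$, not a pair generator whose first slot is a product in $G$; axiom (3) does not apply to it directly. The workable route is the opposite one: expand the \emph{right-hand side} $\langle x,y\rangle^{[a,b]}$ by writing $[a,b]=aba^{-1}b^{-1}$ and iterating (4) four times (once for each letter), then simplify the resulting tower of correction terms using (2), (3), (5), and the inverse formula $\langle g^{-1},w\rangle\sim(\langle g,w\rangle^{g^{-1}})^{-1}$ (which follows from (3) with $xy=1$). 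Alternatively, and this is closer to Miller's original ordering, one first establishes (8)--(10) directly from (3)--(5) without invoking (6); relation (10) then \emph{is} the product form of (6), from which (6) and (7) are immediate. Either way, your identification of (6) as the crux is correct, but the mechanism you sketch for it needs to be reworked.
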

In the unoriented setting, Miller's definition involves the subgroup $S(G) = \langle x^2 : x \in G \rangle$, as in \cite{HM97}. Let $\bigl(G,G\bigr)$ denote the free group on symbols $\langle x, y \rangle$, $(x',y')$, and $(z)$ with $x,y,x',y',z \in G$. The pairs $(x,y)$ are called unoriented pairs and are mapped under $S(G)$ to the \textbf{unoriented commutator} $\{x,y\} := x y x^{-1} y$, while the elements $(z)$ are called square elements and are sent to $z^2$. This yields a canonical homomorphism
$$(G,G)\rightarrow S(G)\,,$$
defined by $\langle x,y \rangle \mapsto [x,y]$, $(x,y) \mapsto \{x,y\}$, and $(z) \mapsto z^2$.
\begin{defn}
\label{Schurito}
Let $K' := \ker\bigl( \bigl(G,G\bigr) \to S(G) \bigr)$ be the kernel of the canonical map. The unoriented Schur multiplier, denoted $\mathcal{M}(G; \mathbb{Z}_2)$, is defined as the quotient of $K'$ by the normal subgroup generated by the following relations:
  \begin{eqnarray}\label{four12}
  		&(x^i)(x^j)&\sim (x^{i+j})\hspace{1cm} i,j\in\ZZ\,,\\\label{four13}
		&(x,xy)&\sim (x)(y),\\\label{four14}
	&\langle x,y\rangle&\sim (x)(x^{-1}y)(y^{-1}),\\\label{four15}
	&\left<xy,z\right>&\sim (y,z)^x(x,z^{-1})\,,\\\label{four16}
	&\left<y,z\right>^x&\sim \left<x,[y,z]\right>\left<y,z\right>\,,\\\label{four17}
        &(y,z)^x&\sim \left<x,\{y,z\}\right>(y,z)\,,\\\label{four18o}
        &(y^{x},z^{x})^{-1}&\sim (x,\{y,z\}^{-1})(y,z)\,,\\\label{fournew}
        &(z) \langle x,y \rangle &\sim \langle y,x \rangle^z (z[x,y])\,.\label{fournew2}
\end{eqnarray}  
\end{defn}
%
%
The relation \eqref{fournew2} appears in \cite{KT17} in the context of equivariant Klein TQFT.
In addition, we have two commutation relations:\\
\begin{eqnarray}\label{conmmuta1}
    (b)\langle a_0,b_0\rangle\sim\langle a_0,b_0\rangle^{b^2}(b)\,,\\ \label{conmmuta2}
    \langle a_0,b_0\rangle (a)\sim (a)\langle a_0,b_0\rangle^{a^{-2}}\,.
\end{eqnarray}
These commutation relations correspond to two equivalent ways of describing the $G$-cobordism over the pair of pants with inputs $x$ and $y$; see Figure~\ref{fig-final} for an illustration.
\begin{figure}
\begin{center}
    \hspace{2cm}\includegraphics[width=0.7\linewidth]{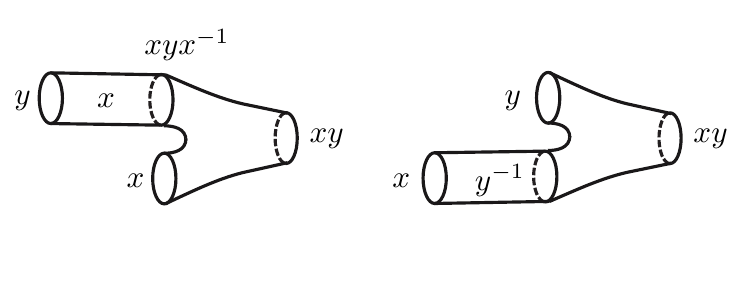}
 \end{center}   
    \caption{Two equivalent ways of describing the $G$-cobordism over the pair of pants with inputs $x$ and $y$ and output $xy$.}
    \label{fig-final}
\end{figure}

Similarly, one can derive a series of further identities from these defining relations.
\begin{thm}\label{dador}
The relations \eqref{four12}-\eqref{four18o} imply the following:
 \begin{eqnarray}\label{four18}
 (1)\sim 1,&(x^{-1})\sim(x)^{-1}\,,&(x,1)\sim 1\\\label{four19}
  \langle x,x\rangle\sim 1,&(x,x)\sim (x)\sim (1,x)\,,&\langle x,y\rangle \sim\langle y,x\rangle^{-1}\\\label{four20}
(x,y)^{-1}\sim (x^{-1},y^{-1})^x,&(xy,z)\sim (y,z)^x\langle x,z^{-1}\rangle,&\\\label{four21}
(xy,z)\sim\langle y,z\rangle^x(x,z)&\langle xy,z\rangle\sim\langle y, z\rangle^x\langle x,z\rangle,&\\\label{four22}
(z_1z_2)\sim\langle z_1^{-1},z_1z_2\rangle^{z_1}(z_1)(z_2),&(z_1)(z_2)\sim\langle z_1,z_1z_2\rangle(z_1z_2),&\\\label{four23}(z_1z_2)\sim\langle z_1z_2,z_1\rangle (z_1)(z_2),&(z_1)(z_2)\sim\langle z_1z_2,z_1^{-1}\rangle^{z_1}(z_1z_2),&\\\label{four24}
(y,z)\sim\langle y,z\rangle (z),& (y,z)\sim(z)^y\langle y,z^{-1}\rangle,&\\\label{four25}
(xy,x)\sim (y,x)^x,&(a)(b)\sim(b^{-1})^a(ab^2),&\\\label{four26}
(z)(x,y)\sim (x^{-1},y^{-1})^{zx}(z\{x,y\}),&\langle x,y\rangle^2\sim([x,y])\,.&\label{fournew3}
 \end{eqnarray}for all $x,y,z,z_1,z_2,a,b\in G$.
\end{thm}
\begin{proof}
Identities \eqref{four18} and \eqref{four19} follow directly from \eqref{four12}, \eqref{four13}, and \eqref{four14}. Those in \eqref{four20} and \eqref{four21} are consequences of \eqref{four15}. For the left-hand side of \eqref{four21}, substitute $x \mapsto x^{-1}$ in \eqref{four15} to obtain
 $$(y,z)^{x^{-1}}\sim \langle x^{-1}y,z\rangle (x^{-1},z^{-1})^{-1}\stackrel{\eqref{four20}}{\sim} \langle x^{-1}y,z\rangle (x,z)^{x^{-1}}$$
 Applying conjugation by $x$ yields 
 $$(y,z)\sim\langle x^{-1}y,z\rangle^x(x,z)$$
 and a suitable change of variables gives the desired relation.  Identities \eqref{four22} and \eqref{four23} follow from \eqref{four15}. For instance, setting $x = z_1$, $y = z_1^{-1}$, and $z = z_1 z_2$ in the left-hand side of \eqref{four21} yields the left-hand side of \eqref{four22}. Identity \eqref{four24} is deduced from \eqref{four15}. The left-hand side of \eqref{four25} follows from the right-hand side of \eqref{four20}. 
 The right-hand side of \eqref{four25} follows by combining the left-hand side with \eqref{four13}:
$$(xy)(y^{-1})\sim (y)^x(y^{-1}x)^x\,.$$
The substitution of $a=xy$, $b=y^{-1}$ implies the identity. 
Finally, the left-hand side from \eqref{four26} is obtained from the right-hand side of \eqref{four25} and right-hand side from \eqref{fournew3} is obtained from relation \eqref{fournew2}.
\end{proof}

We now compute the unoriented Schur multiplier for free, cyclic, and dihedral groups.

 \begin{prop}\label{free1} 
 The unoriented Schur multiplier of the infinite cyclic group is trivial, i.e.,
     $$\ma{M}(\mathbb{Z};\ZZ_2)\cong 0\,.$$
 \end{prop}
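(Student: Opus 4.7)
The plan is to put every element of $K'=\op{ker}\left(\bigl(\ZZ,\ZZ\bigl)\longrightarrow S(\ZZ)\right)$ into a single-generator normal form $(t^N)$, where I write $\ZZ=\langle t\rangle$ multiplicatively, and then read off triviality. Because $\ZZ$ is abelian, $[t^a,t^b]=1$ and the unoriented commutator is $\{t^a,t^b\}=t^at^bt^{-a}t^b=t^{2b}$, so $S(\ZZ)=\langle t^2\rangle$. The canonical map therefore sends the three families of generators by $\langle t^a,t^b\rangle\mapsto 1$, $(t^a,t^c)\mapsto t^{2c}$, and $(t^c)\mapsto t^{2c}$, and every conjugation appearing in the relations \eqref{four15}--\eqref{four18o} is trivial, so those relations contribute nothing beyond what the two substitution-type relations below already give us.

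First I would use \eqref{four14} to replace each oriented pair via $\langle t^a,t^b\rangle\sim (t^a)(t^{b-a})(t^{-b})$, and \eqref{four13}, applied with $x=t^a$ and $y=t^{c-a}$, to replace each unoriented pair via $(t^a,t^c)\sim (t^a)(t^{c-a})$. Iterating these substitutions (and their formal inverses) rewrites any word $w\in K'$, modulo $N$, as a word involving only the square generators $(t^n)$. Next I would apply \eqref{four12} together with its immediate consequences $(1)\sim 1$ and $(t^{-n})\sim (t^n)^{-1}$ from \eqref{four18} to collapse any such product $(t^{n_1})\cdots(t^{n_k})$ to a single $(t^N)$ with $N=n_1+\cdots+n_k$.

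Finally, since $w\in K'$ the reduced form $(t^N)$ must itself map to $1\in S(\ZZ)$, which forces $t^{2N}=1$ and hence $N=0$; using $(t^0)=(1)\sim 1$ one concludes $w\sim 1$, and therefore $\ma{M}(\ZZ;\ZZ_2)\cong 0$. There is essentially no hard step in the argument. The only bookkeeping point is to confirm that the conjugation-laden relations \eqref{four15}--\eqref{four18o} do not secretly impose extra identifications among the squares $(t^n)$, but this is immediate because every inner automorphism of $\ZZ$ is the identity, so those relations collapse to consequences of \eqref{four12}--\eqref{four14} in our setting.
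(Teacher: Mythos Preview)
Your argument is correct and follows essentially the same route as the paper's proof: reduce every generator of $(G,G)$ to a product of square generators $(t^n)$, collapse these to a single $(t^N)$ via \eqref{four12}, and then use membership in $K'$ to force $N=0$. The only cosmetic difference is that the paper disposes of the oriented pairs by invoking the derived relation \eqref{four11} (i.e.\ $\langle x^n,x^s\rangle\sim 1$), whereas you use \eqref{four14} directly to rewrite $\langle t^a,t^b\rangle$ as a product of three squares; both choices lead to the same collapse under \eqref{four12}.
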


\begin{proof}
Let $\mathbb{Z}$ be generated by $x$. By \eqref{four11}, we have $\langle x^n, x^m \rangle \sim 1$ for all $n,m \in \mathbb{Z}$. The remaining generators are of the form $(x^s, x^t)$ and $(x^t)$. Using \eqref{four12} and \eqref{four13}, we obtain $(x^s, x^t) \sim (1, x^t) \sim (x^t)$ for all $s,t \in \mathbb{Z}$. Thus, every element in $\mathcal{M}(\mathbb{Z};\mathbb{Z}_2)$ can be written as a product $(x^{i_1}) \cdots (x^{i_n})$ with $x^{2i_1 + \cdots + 2i_n} = 1$. If $x \neq 1$, this forces $i_1 + \cdots + i_n = 0$, and hence $(x^{i_1}) \cdots (x^{i_n}) \sim (x^{i_1 + \cdots + i_n}) \sim 1$. Therefore, all elements are trivial.
\end{proof}

The following result is key for establishing a Hopf formula for the unoriented Schur multiplier in Section \ref{hopf}.

\begin{thm}\label{freezone}
The unoriented Schur multiplier of a free group is trivial. 
\end{thm}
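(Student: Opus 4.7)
The plan is to reduce every element of $(F,F)/N$ to a product of square-type generators, then use freeness of $F$ to show such products are trivial whenever they lie in the kernel.

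First, I would eliminate the $\langle \cdot,\cdot\rangle$ and $(\cdot,\cdot)$ generators using the defining relations. Relation \eqref{four14} directly rewrites $\langle x,y\rangle \sim (x)(x^{-1}y)(y^{-1})$. Relation \eqref{four13}, with $y$ replaced by $x^{-1}b$, rewrites $(x,b) \sim (x)(x^{-1}b)$. Together with $(z)^{\pm 1}\sim (z^{\mp 1})$ from \eqref{four18}, every element of $(F,F)$ is equivalent modulo $N$ to a product of square-type generators $W \equiv (z_1)(z_2)\cdots(z_n)$ with $z_i\in F$.

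Next, the kernel condition $W\in K'$ translates to $z_1^2 z_2^2\cdots z_n^2 = 1$ in the free group $F$. I would proceed by induction on $n$. The cases $n\leq 2$ are handled exactly as in Proposition \ref{free1}: for $n=1$, freeness forces $z_1=1$; for $n=2$, uniqueness of square roots in a free group forces $z_2=z_1^{-1}$, and then $(z_1)(z_1^{-1})\sim (z_1^0)\sim 1$ by relation \eqref{four12}.

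For $n\geq 3$, I would induct on the total word-length $L(W):=\sum_i \ell(z_i)$ with respect to a fixed free basis $X$ of $F$. The relation $\prod_i z_i^2 = 1$ in $F$ forces either an internal cancellation (some $z_i = a w a^{-1}$ for a letter $a\in X^{\pm 1}$) or a boundary cancellation between consecutive $z_j^2$ and $z_{j+1}^2$. In the internal case, the conjugation-type relations \eqref{four16}, \eqref{four17}, \eqref{four18o} combined with \eqref{four14} let one rewrite $(awa^{-1})$ as a product with strictly smaller total length. In the boundary case, relation \eqref{four25}, namely $(a)(b)\sim (b^{-1})^a (ab^2)$, combined with the consolidation relation \eqref{four12}, redistributes letters between neighboring square generators so as to strictly decrease $L(W)$. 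Iterating drives $L(W)$ to $0$, at which point every $z_i=1$ and $W\sim 1$.

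The main obstacle is the combinatorial bookkeeping in the last step: verifying that in every configuration with $\prod_i z_i^2 = 1$ at least one length-reducing move is available among the seven defining relations, and that these moves do not increase $n$ uncontrollably. This should be tractable thanks to the tight structure of free cancellation in $F$ (any such relation must match letters in pairs through an internal or a boundary cancellation), but the case-by-case organization is the delicate part of the argument.
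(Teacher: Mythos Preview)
Your reduction to products $(z_1)\cdots(z_n)$ and the base cases $n\le 2$ are fine. The gap is in the inductive step: the moves you propose do \emph{not} decrease $L(W)=\sum_i\ell(z_i)$.

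For the boundary case, write $z_j=uc$ and $z_{j+1}=c^{-1}v$ with $c$ a single letter. Relation~\eqref{four25} gives
\[
(z_j)(z_{j+1})\ \sim\ (z_jz_{j+1}^{-1}z_j^{-1})\,(z_jz_{j+1}^{2})\ =\ (ucv^{-1}u^{-1})\,(uvc^{-1}v),
\]
so the pair of lengths $(\ell(u)+1,\ \ell(v)+1)$ is replaced by lengths bounded by $2\ell(u)+\ell(v)+1$ and $\ell(u)+2\ell(v)+1$; the contribution to $L(W)$ jumps from $\ell(u)+\ell(v)+2$ to as much as $3\ell(u)+3\ell(v)+2$, strictly larger unless $u=v=1$ (and that degenerate case is already handled by~\eqref{four12}). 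Relation~\eqref{four12} cannot repair this afterwards, since the two new entries are not powers of a common element. The internal case is no better: none of \eqref{four16}--\eqref{four18o} applies directly to a bare square generator $(awa^{-1})$, and if you unfold it via~\eqref{four24} together with~\eqref{four13} and~\eqref{four14} you get
\[
(awa^{-1})\ \sim\ (a)\,(a^{-1}w)\,(w^{-1})\,(wa)\,(a^{-1}),
\]
raising $L$ from $\ell(w)+2$ to $3\ell(w)+4$. So there is in general no length-decreasing move available, and the induction on $L(W)$ never gets off the ground. This is not merely bookkeeping; the proposed descent scheme fails as stated.

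The paper's proof avoids a termination argument altogether. It proceeds by induction on the rank, writing $F=A*\langle b\rangle$ with $\ma{M}(A;\ZZ_2)$ already trivial. Using~\eqref{four22} together with the oriented normal-form results for free products, it shows every class in $(F,F)/N$ is represented by a reduced word in the three types $(a)$, $(a'b)$, $(b^{k})$ with $a,a'\in A$; then, by inspecting the possible last two factors, it checks that any nonempty such word maps under $(\cdot)\mapsto(\cdot)^2$ to a \emph{nontrivial} reduced word in $A*\langle b\rangle$. Hence the kernel $K'$ meets this normal form only in the empty word. In short, the paper proves injectivity of a normal form rather than termination of a rewriting procedure, and it is the free-product structure---not cancellation combinatorics on arbitrary square products---that carries the argument.
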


\begin{proof}
As in the oriented case, the argument reduces to groups with finitely many generators. Proposition~\ref{free1} handles the one-generator case. Now let $G = A * B$ with $|A|, |B| < \infty$, and assume $\mathcal{M}(A;\mathbb{Z}_2) = 1$ and $\mathcal{M}(B;\mathbb{Z}_2) = 1$. 

Firstly, using relations \eqref{four13} and \eqref{four14} together with \eqref{four22} and \eqref{four23}, we obtain that any element of $(G,G)$ can be written as a product of factors of the form $(a)$, $(a'b)$, and $(b)$, with $a,a' \in A$, $b \in B$. Nevertheless, using \eqref{four22} and \eqref{four23} again, we obtain that every element in $(G,G)$ is the product of elements of $(a)$, $(b)$, and $\mu$, where $\mu$ lies in the subgroup of $(G,G)$ generated by all $\langle a,b\rangle$ with $a \neq 1$ in $A$ and $b \neq 1$ in $B$.

Similarly to the arguments in \cite{Mi52} (or see \cite[Lemma 2.6.2, pp. 68-69]{Kar87}), we need the commutation identities similar to \eqref{four8}, \eqref{four9}, and \eqref{four10}, together with repeated use of the rules:
\begin{eqnarray}\label{rel1}
    \langle a,a'\rangle^{b_0}\sim\langle b_0,[a,a']\rangle\langle a,a'\rangle\\\label{rel2}
    \langle a,b\rangle^{a_0}\sim \langle a_0a,b\rangle\langle b,a_0\rangle\\\label{rel3}
    \langle a,b\rangle^{b_0}\sim \langle b_0,a\rangle\langle a,b_0b\rangle
\end{eqnarray}
and three similar rules, obtained from these by interchanging $a$ with $b$, $a_0$ with $b_0$, and $a'$ with $b'$ (see \cite[p. 69]{Kar87}). 

Recall the commutation relations \eqref{conmmuta1} and \eqref{conmmuta2} given by $(b)\langle a_0,b_0\rangle\sim\langle a_0,b_0\rangle^{b^2}(b)$ and $\langle a_0,b_0\rangle (a)\sim (a)\langle a_0,b_0\rangle^{a^{-2}}$. For the commutation between elements $(a)$ and $(b)$, we use:
\begin{align}
    (a)(b)(a^{-1})(b^{-1})&\sim\langle a,ab\rangle(ab)(a^{-1})(b^{-1})\\
    &\sim \langle a,ab\rangle\langle ab,aba^{-1}\rangle (aba^{-1})(b^{-1})\\
    &\sim \langle a,ab\rangle\langle ab,aba^{-1}\rangle\langle aba^{-1},[a,b]\rangle ([a,b])\\\label{equidos}
    &\sim \langle a,ab\rangle\langle ab,aba^{-1}\rangle\langle aba^{-1},[a,b]\rangle \langle a,b\rangle^2\\
&\sim\langle a,b\rangle^a\langle b,a^{-1}\rangle^{aba}\langle a^{-1},b^{-1}\rangle^{ab^2}\langle a,b^{-1}\rangle^{aba^{-1}}\langle a,b\rangle^2    
\end{align}
where we used the right-hand side of \eqref{fournew3} in relation \eqref{equidos}. Consequently, by applying relations \eqref{rel1}, \eqref{rel2}, \eqref{rel3}, and the other three, we obtain the last commutation:
\begin{equation}\label{rel4}
(b)(a)\sim \mu'(a)(b)
\end{equation}
where $\mu'$ belongs to the subgroup of $(G,G)$ generated by all $\langle a,b\rangle$ with $a \neq 1$ in $A$ and $b \neq 1$ in $B$.

Using the commutation relations \eqref{conmmuta1}, \eqref{conmmuta2}, and \eqref{rel4}, together with the simplification of the conjugations \eqref{rel1}, \eqref{rel2}, \eqref{rel3}, and the other three, we arrive at the conclusion that every word $\omega \in (G,G)$ can be reduced to $\omega \sim \alpha \mu \beta$, where $\alpha$ is a word composed of elements $(a)$, $\langle a',a''\rangle$, with $a,a',a'' \neq 1$; $\beta$ is a word composed of elements $(b)$, $\langle b',b''\rangle$, with $b,b',b'' \neq 1$; and $\mu$ lies in the subgroup of $(G,G)$ generated by all $\langle a,b\rangle$ with $a \neq 1$ in $A$ and $b \neq 1$ in $B$. As in Miller's proof, we obtain $[\omega] \neq 1$ whenever $\omega$ is nonempty, proving the theorem.
\end{proof}

The cyclic group case differs from the classical Schur multiplier.

\begin{prop}\label{cyclic}
The unoriented Schur multiplier of the cyclic group $\mathbb{Z}_n$ is given by
    \begin{equation}
        \ma{M}(\ZZ_n;\ZZ_2)\cong\left\{\begin{array}{cc}
            0 &  n \textrm{ odd\,,} \\
            \ZZ_2 & n \textrm{ even\,.}  
        \end{array}\right.
    \end{equation}
where for $n$ even, a generator is represented by $(x^{n/2})$.
\end{prop}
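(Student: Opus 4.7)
My plan is to show that every element of $\ma{M}(\ZZ_n;\ZZ_2)$ is equivalent to a single square $(x^k)$ for some $k\in \ZZ$, cut the candidates down using the kernel condition, and then separate the even/odd cases.

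First I will reduce the three families of generators to squares. Because $\ZZ_n=\langle x\rangle$ is cyclic, every generator of $(G,G)$ is of the form $\langle x^s,x^t\rangle$, $(x^s,x^t)$, or $(x^c)$. Relation \eqref{four13}, applied with the substitution $x\to x^s$ and $y\to x^{t-s}$, gives $(x^s,x^t)\sim (x^s)(x^{t-s})$; relation \eqref{four14} similarly collapses $\langle x^s,x^t\rangle \sim (x^s)(x^{t-s})(x^{-t})$. Combining with \eqref{four12}, which merges consecutive squares as $(x^i)(x^j)\sim (x^{i+j})$, an arbitrary word in $(G,G)$ becomes equivalent modulo $N$ to a single square $(x^k)$; using \eqref{four18} and $x^n=1$ this class depends only on $k \bmod n$.

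Next I will impose the kernel condition. The canonical map sends $(x^k)$ to $x^{2k}\in S(\ZZ_n)$, so $(x^k)\in K'$ exactly when $n\mid 2k$. If $n$ is odd then $n\mid k$, hence $(x^k)=(x^0)\sim 1$ by \eqref{four18}, which immediately yields $\ma{M}(\ZZ_n;\ZZ_2)=0$. If $n$ is even then $k\equiv 0$ or $n/2 \pmod n$, so the only possibly nontrivial class is $(x^{n/2})$. Using \eqref{four12} twice gives $(x^{n/2})(x^{n/2})\sim(x^n)=(1)\sim 1$, so $\ma{M}(\ZZ_n;\ZZ_2)$ has order at most two.

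The main obstacle is proving that $(x^{n/2})$ is genuinely nontrivial for every even $n$, not just a formal candidate. A naive detector $\phi\colon\ma{M}(\ZZ_n;\ZZ_2)\to \ZZ_2$ defined on generators by $\langle x^a,x^b\rangle\mapsto 0$, $(x^a,x^b)\mapsto b\bmod 2$, $(x^c)\mapsto c\bmod 2$ does respect all seven relations of Definition \ref{Schurito} (the oriented pair relations become $0=0$, and the unoriented ones reduce to the identities $-c\equiv c\pmod 2$ and $2c\equiv 0\pmod 2$), but it only sees $(x^{n/2})$ when $n\equiv 2\pmod 4$; for $n\equiv 0\pmod 4$ every additive character $\ZZ_n\to\ZZ_2$ annihilates $n/2$. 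To handle the $4\mid n$ case I will therefore invoke the isomorphism $\ma{M}(G;\ZZ_2)\cong H^2(G;\ZZ_2)$ announced in the introduction and proved via the Hopf formula in Section \ref{hopf}: the standard computation $H^2(\ZZ_n;\ZZ_2)\cong \ZZ_2/n\ZZ_2$ equals $0$ for odd $n$ and $\ZZ_2$ for even $n$, matching the upper bound and forcing $(x^{n/2})$ to be the generator.

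Putting the pieces together gives both the isomorphism and the identification of the generator. An alternative finish, worth recording, is to use the geometric interpretation from Section \ref{Gcob}: the principal $\ZZ_n$-bundle over $\RR P^2$ with middle-circle monodromy $x^{n/2}$ has total space a disjoint union of $n/2$ spheres, and representing this as $(x^{n/2})$ in the unoriented Schur multiplier shows it is the same element detected by $H^2(\ZZ_n;\ZZ_2)$.
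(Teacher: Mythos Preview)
Your argument is correct and follows the same route as the paper: collapse every generator of $(G,G)$ to a single square $(x^k)$ via relations \eqref{four12}--\eqref{four14}, then impose the kernel condition $n\mid 2k$. The paper's own proof in fact stops at that point and does not verify that $(x^{n/2})$ is nontrivial for even $n$; you are more careful here, and your forward reference to the isomorphism $\ma{M}(G;\ZZ_2)\cong H^2(G;\ZZ_2)$ of Section~\ref{hopf} is exactly how the paper ultimately justifies it too (the Hopf-formula argument there rests only on Theorem~\ref{freezone}, not on this proposition, so there is no circularity). Your closing geometric remark about the $\RR P^2$ bundle is a nice observation but does not by itself establish nontriviality in $\ma{M}(\ZZ_n;\ZZ_2)$; the $H^2$ appeal is what does the work.
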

\begin{proof}
Let $\mathbb{Z}_n = \langle x \rangle$. All symbols $\langle x^i, x^j \rangle$ are trivial by \eqref{four11}. Using \eqref{four13} and \eqref{four12}, we obtain $(x^i, x^j) \sim (x^i)(x^{j-i}) \sim (x^j)$. Hence $(x^j)$ lies in $\mathcal{M}(\mathbb{Z}_n; \mathbb{Z}_2)$ if and only if $2j \equiv 0 \pmod n$, which occurs precisely when $n$ is even and $j = n/2$.
\end{proof}

The dihedral group of order $2n$ has presentation 
$$D_{2n}=\langle a,b : a^2=b^2=(ab)^n=1\rangle\,.$$
Set $c:=ab$, the rotation by $2\pi/n$,  and note that $a c^i = c^{-i} a$.
\begin{prop}\label{dihedral}
Let $n$ be an odd integer. Then $\mathcal{M}(D_{2n}; \mathbb{Z}_2)$ is multiplicatively generated by the classes $x_1, \dots, x_n$, where $x_i = (a c^i)$. If $n$ is even, $\mathcal{M}(D_{2n}; \mathbb{Z}_2)$ is generated by $x_1, \dots, x_n$ and an additional class $y = (c^{n/2})$.
\end{prop}

\begin{proof}
By \cite{DS22}, any product $\langle x_1, y_1 \rangle \cdots \langle x_k, y_k \rangle$ is equivalent to a product of terms of the form $\langle c^i, a \rangle$. Relation \eqref{four14} gives $\langle c^i, a \rangle \sim (c^i)(a c^i)(a)$. Moreover, using \eqref{four13} we obtain:
  \begin{itemize}
      \item[i)] $(c^i,c^j)\sim (c^j)$,
      \item[ii)] $(c^i,ac^j)\sim (c^i)(ac^{i+j})$,
      \item[iii)] $(ac^i,c^j)\sim (ac^i)(ac^{i+j})$,
      \item[iv)] $(ac^i,ac^j)\sim (ac^i)(c^{j-i})$.
  \end{itemize}
Thus any element of $(G,G)$ reduces to a product of symbols $(c^i)$ and $(a c^j)$ with $i,j \in {0,\dots,n-1}$. Since $(a c^i)(a c^i) \sim (a c^i a c^i) \sim 1$, each $x_i = (a c^i)$ has order at most $2$. For products of the form $(c^{i_1}) \cdots (c^{i_m})$, the condition $\sum i_j \equiv 0 \pmod n$ must hold. When $n$ is odd, the classes $x_i$ suffice. For even $n$, the extra class $y = (c^{n/2})$ is required. Note that further relations among the $x_i$ and $y$ may exist.
  
  Therefore, any sequence in $(G,G)$ reduces to a sequence given by the product of elements of the form $(c^i)$ and $(ac^j)$ for $i,j\in \{0,\cdots,n-1\}$.  
  Since every element $(ac^i)$ satisfies to be of torsion two because $(ac^i)(ac^i)\sim (ac^iac^i)\sim 1$, in the sequence, it rests to consider the product of elements $(c^{i_1}\cdots c^{i_m})$ with $\sum_j i_j=0\mod n$. If $n$ is odd, it is enough to consider the classes $x_i=(ac^{i})$. However, if $n$ is even, we must consider an additional class given by $y=(c^{n/2})$. We want to remark that there could be relations between the classes $ x_i$ and $y$.
\end{proof}
  
\section{The Hopf formula for the unoriented case}
\label{hopf}
Assume $G$ is a finite group with presentation $G=\langle F| R\rangle$.
Miller \cite{Mi52} shows that the Schur multiplier $\ma{M}(G)$ admits an isomorphism with the Hopf formula, i.e.,
\begin{equation}\label{hopffor}\ma{M}(G)\cong \frac{[F,F]\cap R}{[F,R]}\,.\end{equation}
For the unoriented Schur multiplier $\ma{M}(G;\ZZ_2)$, we obtain the following analogous Hopf formula:
\begin{equation}\label{Hopfi}\ma{M}(G;\ZZ_2)\cong \frac{S(F)\cap  R}{{[F,R]R^2}}\,.\end{equation}
We now reproduce the process of Miller \cite{Mi52}, adapted to the unoriented case.

Recall that the unoriented Schur multiplier is defined as the quotient of the kernel $K' := \ker\bigl((G,G) \to S(G)\bigr)$ by the normal subgroup $N'$ generated by relations \eqref{four12}--\eqref{four18o}.
Given a square-central extension $1\rightarrow A\rightarrow E\stackrel{\eta}{\rightarrow} G\rightarrow 1$ (see Definition \ref{squarecen}), we define a homomorphism $(G,G) \to E$ by mapping the generators $\langle x, y \rangle$, $(x', y')$, and $(z)$ to $[\overline{x}, \overline{y}]$, ${\overline{x}', \overline{y}'}$, and $\overline{z}^2$, respectively, where $\eta(\overline{x}) = x$, $\eta(\overline{y}) = y$, $\eta(\overline{x}') = x'$, $\eta(\overline{y}') = y'$, and $\eta(\overline{z}) = z$.
Because the extension is square-central, this homomorphism is well defined.
Moreover, its restriction to $K'$ has image contained in $S(E) \cap A$, and it sends $N'$ to $1$, inducing a surjective homomorphism $\ma{M}(G;\ZZ_2) \to S(E) \cap A$. Consequently, we obtain the exact sequence
\begin{equation}\label{fifon}
\ma{M}(E;\ZZ_2) \to \ma{M}(G;\ZZ_2) \to S(E) \cap A\,.
\end{equation}

Now consider the central extension
\begin{equation}\label{gigon}
1 \to \frac{R}{[F,R]R^2} \to \frac{F}{[F,R]R^2} \to G \to 1.
\end{equation}
Set $R^0 = R/[F,R]R^2$ and $F^0 = F/[F,R]R^2$. Observe that every element of $R^0$ has order two.
From \eqref{fifon}, we obtain the exact sequence
$$\ma{M}(F^0;\ZZ_2)\rightarrow \ma{M}(G;\ZZ_2)\rightarrow S(F^0)\cap R^0\,.$$
As in the oriented case \cite[p. 594]{Mi52}, the unoriented Schur multiplier of a free group is trivial (Theorem \ref{freezone}), so $\ma{M}(F^0;\ZZ_2) = 0$.
Thus $\ma{M}(G;\ZZ_2)$ is isomorphic to $S(F^0) \cap R^0$. However, we have $S(F^0) \cap R^0 = (S(F) \cap R)/[F,R]R^2$.

A remarkable result for the classical Schur multiplier $\ma{M}(G)$ is its isomorphism with the cohomology group $H^2(G,\CC^*)$ in terms of central group extensions. The unoriented Schur multiplier $\ma{M}(G;\ZZ_2)$ also admits a cohomological interpretation, where the role of the units $\CC^*$ is played by the field with two elements $\FF_2$ (the coefficients $\ZZ_2$ endowed with multiplicative structure). We now establish an isomorphism
$$\ma{M}(G;\ZZ_2)\cong H^2(G;\ZZ_2)\,.$$

\begin{defn}\label{squarecen}
Let $G$ be a group and $A$ an abelian group. A \textit{central extension} of $A$ by $G$ is a short exact sequence
$$1\rightarrow A\rightarrow E \rightarrow G\rightarrow 1\,,$$
such that the image of $A$ lies in the center of $E$. If every element of $A$ is of order two, we call it a \textit{square-central extension}.
Two extensions $E$ and $E'$ are \textit{equivalent} if there exists an isomorphism between the two exact sequences.
The set of equivalence classes of central extensions of $A$ by $G$ is denoted by $\xi(G,A)$, and we denote by $\xi^s(G,A)$ the subset of equivalence classes of square-central extensions of $A$ by $G$.
\end{defn}

We require several results to show that $\xi^s(G,\FF_2)$ is isomorphic to the Hopf formula \eqref{Hopfi}.
The first result is a general property of central extensions; see \cite[p. 94]{Br82}.

\begin{lem}\label{lemma 15}
Consider a central extension
$$1 \rightarrow M \rightarrow E \rightarrow E/M \rightarrow 1\,.$$
Then for any abelian group $J$, there is an exact sequence
\begin{equation}
\op{Hom}(E,J)\stackrel{\op{Res}}{\longrightarrow}\op{Hom}(M,J)\stackrel{\op{Tra}}{\longrightarrow}\xi(E/M,J)
\end{equation}
where $\op{Res}$ is the restriction map and $\op{Tra}$ is the transgression map. For $\varphi: M \to J$, the transgression map sends $\varphi$ to the equivalence class of the central extension
\begin{equation}1 \rightarrow J \rightarrow \frac{J \times E}{K}\rightarrow E/M \rightarrow 1\,,
\end{equation}
where $K$ is the normal subgroup of $J\times E$ generated by $\lbrace (\varphi(m),m^{-1})|m\in M\rbrace$.
\end{lem}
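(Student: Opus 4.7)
The plan is to follow the classical elementary argument for the five-term exact sequence associated with a central extension, as presented in Brown's book on cohomology of groups. The argument has three stages: checking that the transgression map is well-defined, showing $\op{Tra}\circ\op{Res}=0$, and extracting a preimage under $\op{Res}$ from the triviality of a transgressed extension.

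First I would verify that $\op{Tra}$ is well-defined. Given $\varphi:M\to J$, the subgroup $K$ is contained in the abelian, central subgroup $J\times M$ of $J\times E$, so conjugation by $(j,e)\in J\times E$ fixes each generator $(\varphi(m),m^{-1})$, using that $M$ is central in $E$. Thus $K$ is normal, and as a set equals $\{(\varphi(m),m^{-1}):m\in M\}$. Setting $E':=(J\times E)/K$, the projection $(j,e)K\mapsto eM$ has kernel consisting precisely of the cosets of the form $(j,1)K$ via the identity $(j,m)K=(j\varphi(m),1)K$, and the inclusion $J\hookrightarrow E'$, $j\mapsto(j,1)K$, is injective because $K\cap(J\times\{1\})=\{1\}$. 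Since $J$ is central in $J\times E$, its image is central in $E'$, so we obtain a bona fide class in $\xi(E/M,J)$.

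Second, to show $\op{Tra}(\Phi|_M)=0$ for any $\Phi:E\to J$, I would exhibit a splitting $s:E/M\to E'$ by setting $s(eM):=(\Phi(e)^{-1},e)K$. Well-definedness is immediate from $(\Phi(m)^{-1},m)=(\varphi(m)^{-1},m)=(\varphi(m),m^{-1})^{-1}\in K$; the homomorphism property uses that $J$ is abelian and that $\Phi$ is a homomorphism; and the identity $\pi\circ s=\op{id}$ holds by construction. Hence the class of $E'$ in $\xi(E/M,J)$ is trivial. Conversely, suppose $\op{Tra}(\varphi)=0$, so there exists a homomorphism section $s:E/M\to E'$. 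Introduce the homomorphism $\alpha:E\to E'$, $e\mapsto(1,e)K$, and define $\Phi(e):=\alpha(e)\cdot s(eM)^{-1}$. Since $\alpha$ and $s\circ\pi$ both project to the same element of $E/M$, $\Phi(e)$ lies in the central subgroup $J\subset E'$; and because two homomorphisms from $E$ whose pointwise ratio lies in a central subgroup have a ratio that is itself a homomorphism, $\Phi:E\to J$ is a group homomorphism. For $m\in M$ one computes $\alpha(m)=(1,m)K=(\varphi(m),1)K=\varphi(m)$ and $s(M)=1$, so $\Phi|_M=\varphi$, proving $\varphi\in\op{Im}(\op{Res})$.

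The main obstacle is purely bookkeeping: one must check that $K$ is normal (requiring centrality of $M$) and that the embedding $J\hookrightarrow E'$ is injective (requiring that $(j,1)\in K$ forces $j=1$, which follows from the way the generators of $K$ pair a nontrivial $m\in M$ with a nontrivial second coordinate). Once $E'$ is correctly recognized as a central extension of $E/M$ by $J$, exactness at $\op{Hom}(M,J)$ reduces to the two short verifications above, and no deeper cohomological machinery is invoked.
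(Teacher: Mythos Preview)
Your proof is correct and follows essentially the same classical argument as the paper: both establish exactness at $\op{Hom}(M,J)$ by showing that $\varphi$ extends to a homomorphism $E\to J$ if and only if the pushed-out extension $E'=(J\times E)/K$ is trivial. The only cosmetic difference is that the paper phrases triviality via an explicit isomorphism $f:E'\to J\times(E/M)$ and sets $\psi(e)=p_J\circ f((1,e)K)$, whereas you phrase it via a section $s:E/M\to E'$ and set $\Phi(e)=(1,e)K\cdot s(eM)^{-1}$; these are the same construction, since a section and an isomorphism to the split extension determine one another. You also spell out the well-definedness of $\op{Tra}$ (normality of $K$, injectivity of $J\hookrightarrow E'$), which the paper leaves implicit.
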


\begin{proof}
Note that $\varphi \in \operatorname{im}(\operatorname{Res})$ if and only if $\varphi$ extends to $E$. Moreover, $\varphi \in \ker(\operatorname{Tra})$ if and only if there exists an isomorphism $f: \frac{J \times E}{K} \to J \times E/M$ making the following diagram commute:
 \begin{equation}\label{tyty}
 \begin{tikzcd}
1 \arrow[r] & J \arrow[r, "i"] \arrow[rd, "i'"] & J\times E/M \arrow[r, "\eta"]         & E/M \arrow[r] & 1 \\
            &                                      & \frac{J \times E}{K} \arrow[ru, "\eta'"] \arrow[u, "f"] &               &  
\end{tikzcd}
\end{equation}
We show the equivalence of these two statements. First, suppose such an $f$ exists. Define $\psi: E \to J$ by $\psi(e) = p_J \circ f((1,e)K)$, where $p_J$ is projection onto $J$. For $e \in M$, we have $(1,e)K = (\varphi(e),1)K$ by definition of $K$. Using the left commutative triangle of \eqref{tyty}, we obtain $\psi(e) = p_J \circ f((\varphi(e),1)K) = \varphi(e)$. Conversely, suppose $\varphi$ extends to a homomorphism $\psi: E \to J$. Define $f$ by $f((j,e)K) = (j\psi(e), eM)$. This map is well defined: if $(j,e)K = (j',e')K$, then $(j,e)(a\varphi(m)a^{-1}, b m^{-1} b^{-1}) = (j',e')$ for some $a \in J$, $b \in E$. Since $J$ is abelian, $j\varphi(m) = j'$, and because $M$ is central, $e b m^{-1} b^{-1} = e'$ implies $e m^{-1} = e'$, so $eM = e'M$ and $m = e'^{-1}e$. Consequently, $j\psi(e) = j'\psi(e')$, giving $(j\psi(e), eM) = (j'\psi(e'), e'M)$.
\end{proof}
A crucial observation is that for a square-central extension, i.e., when $M \leq E$ satisfies $M^2 = 1$ and $M \subseteq Z(E)$, there is a canonical $\FF_2$-module structure on $M$ given by $0\cdot x = 0$ and $1\cdot x = x$ for $x \in M$. Moreover, morphisms of $\FF_2$-modules coincide with group homomorphisms between abelian groups. Here, we also denote by $E^2$ the subgroup of squares.

\begin{lem} Consider a square-central extension
$$1 \rightarrow M \rightarrow E \rightarrow E/M \rightarrow 1\,,$$
with $|E^2\cap M|<\infty$. 
 Then there is an $\FF_2$-module isomorphism 
    \[E^2\cap M \cong \op{im}(\op{Hom}_{\FF_2}(M,\mathbb{F}_2)\rightarrow \xi^s(E/M,\mathbb{F}_2))\,.\]
\end{lem}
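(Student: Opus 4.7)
The plan is to apply Lemma \ref{lemma 15} with $J=\FF_2$ and to identify the image of the resulting transgression map. Since group homomorphisms between abelian groups of exponent two are automatically $\FF_2$-linear, and since the extension constructed in the definition of $\op{Tra}$ has kernel $\FF_2$ of order two (hence is automatically square-central), the transgression produces a map
$$\op{Tra}:\op{Hom}_{\FF_2}(M,\FF_2)\longrightarrow \xi^{s}(E/M,\FF_2)\,,$$
which I take to be the map in the statement. By the exactness asserted in Lemma \ref{lemma 15}, the image of $\op{Tra}$ is isomorphic to $\op{Hom}_{\FF_2}(M,\FF_2)/\op{im}(\op{Res})$, so the problem reduces to computing $\op{im}(\op{Res})$ explicitly.

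The first step will be to establish the identification
$$\op{im}(\op{Res})=\op{Ann}(E^{2}\cap M):=\{\varphi\in \op{Hom}_{\FF_2}(M,\FF_2):\varphi|_{E^{2}\cap M}=0\}\,.$$
The inclusion $\subseteq$ is immediate, since any $\psi:E\to \FF_2$ sends squares to zero and hence the restriction $\psi|_M$ vanishes on $E^{2}\cap M$. For the reverse inclusion I will use that $E/E^{2}$ has exponent two, is therefore abelian, and so $[E,E]\subseteq E^{2}$; consequently $V:=E/E^{2}$ is an $\FF_2$-vector space. The composition $M\hookrightarrow E\twoheadrightarrow V$ has kernel $E^{2}\cap M$, yielding an inclusion $M/(E^{2}\cap M)\hookrightarrow V$, and any $\varphi\in \op{Ann}(E^{2}\cap M)$ descends to an $\FF_2$-linear functional on this subspace. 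Linear-algebra extension of this functional to all of $V$ and pullback to $E$ produces a homomorphism in $\op{Hom}(E,\FF_2)$ restricting to $\varphi$.

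The second step is a standard annihilator-duality computation. Dualizing the short exact sequence of $\FF_2$-vector spaces $0\to E^{2}\cap M\to M\to M/(E^{2}\cap M)\to 0$ yields the short exact sequence
$$0\longrightarrow (M/(E^{2}\cap M))^{*}\longrightarrow M^{*}\longrightarrow (E^{2}\cap M)^{*}\longrightarrow 0\,,$$
so $M^{*}/\op{Ann}(E^{2}\cap M)\cong (E^{2}\cap M)^{*}$. The hypothesis $|E^{2}\cap M|<\infty$ makes $E^{2}\cap M$ a finite-dimensional $\FF_2$-vector space and therefore (non-canonically) isomorphic to its own dual, yielding $\op{im}(\op{Tra})\cong E^{2}\cap M$ as $\FF_2$-modules.

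The main obstacle in this plan is the reverse inclusion in the first step: showing that every functional vanishing on $E^{2}\cap M$ genuinely extends to all of $E$. The crucial input is the inclusion $[E,E]\subseteq E^{2}$, which allows the abelianization of $E$ modulo squares to coincide with $E/E^{2}$; without this one would only be able to extend $\varphi$ to the mod-$2$ abelianization $E/[E,E]E^{2}$, and the clean identification of $\op{im}(\op{Res})$ with $\op{Ann}(E^{2}\cap M)$ would be obscured.
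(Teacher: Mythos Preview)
Your proof is correct and follows essentially the same route as the paper's: both apply Lemma \ref{lemma 15} with $J=\FF_2$, identify $\op{im}(\op{Res})$ with the annihilator $K_0=\{\varphi:\varphi|_{E^2\cap M}=0\}$ (the paper phrases the extension step as ``$\FF_2$ is divisible as an $\FF_2$-module'' where you invoke linear-algebra extension, but these are equivalent), and then conclude $\op{Hom}_{\FF_2}(M,\FF_2)/K_0\cong\op{Hom}_{\FF_2}(E^2\cap M,\FF_2)\cong E^2\cap M$ using finiteness. Your writeup is in fact slightly more explicit than the paper's in justifying $[E,E]\subseteq E^2$ and in invoking the finiteness hypothesis for the double-dual identification.
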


\begin{proof}
Lemma \ref{lemma 15} yields an $\FF_2$-module isomorphism
$$\op{im}(\op{Hom}_{\mathbb{F}_2}(M,\mathbb{F}_2)\rightarrow \xi^s(E/M,\mathbb{F}_2)) \cong \frac{\op{Hom}_{\mathbb{F}_2}(M,\mathbb{F}_2)}{\op{im}(\op{Hom}(E,\mathbb{F}_2)\rightarrow \op{Hom}_{\mathbb{F}_2}(M,\mathbb{F}_2))}\,.$$ 
We claim that $L:=\op{im}(\op{Hom}(E,\mathbb{F}_2)\rightarrow \op{Hom}_{\mathbb{F}_2}(M,\mathbb{F}_2))$ coincides with
$$K_0:=\op{ker} (\op{Hom}_{\FF_2}(M,\mathbb{F}_2)\rightarrow \op{Hom}_{\FF_2}(E^2 \cap M,\mathbb{F}_2) )\,.$$ This implies $\op{Hom}_{\mathbb{F}_2}(M,\mathbb{F}_2)/L\cong \op{Hom}_{\mathbb{F}_2}(E^2 \cap M,\mathbb{F}_2)$, and the lemma follows. 

We prove the two inclusions. First, take $\varphi \in L$; then $\varphi$ extends to $E$. Since $\mathbb{F}_2$ is an abelian group with trivial square, we have $E^2 \cap M \subset \ker \varphi$. Hence $\varphi \in K_0$, and $L \subset K_0$.

Conversely, let $\varphi \in K_0$, so $E^2 \cap M \subset \ker \varphi$. Then $\varphi$ factors through $M/(E^2 \cap M)$. This quotient is isomorphic to $E^2M/E^2$, which is an $\FF_2$-submodule of $E/E^2$. Because $\mathbb{F}_2$ is divisible as an $\FF_2$-module, the morphism extends to $E/E^2$ as in the following diagram:
$$\xymatrix{E^2M/E^2=M/(E^2\cap M)\ar[r]\ar@{_(->}[d] &\mathbb{Z}/2\\E/E^2 \ar[ru]&} \,,$$
Composing with the canonical projection $E \to E/E^2$ gives an extension of $\varphi$ to $E$. Thus $K_0 \subset L$.
 \end{proof}

For a presentation $G = \langle F \mid R \rangle$, we have the square-central extension \eqref{gigon}. We now show that the transgression map is surjective.
 
\begin{lem}
   The transgression map $\op{Hom}(R/[F,R]R^2,A)\rightarrow \xi^s(G,A)$ is surjective.
\end{lem}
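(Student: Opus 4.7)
The plan is to start with an arbitrary square-central extension $1 \longrightarrow A \longrightarrow E \stackrel{\pi}{\longrightarrow} G \longrightarrow 1$ and construct a homomorphism $\varphi \in \op{Hom}(R/[F,R]R^2, A)$ whose transgression represents the class of $E$ in $\xi^s(G,A)$. First I would exploit the freeness of $F$: for each free generator $x$ of $F$, pick any lift to $E$ of the image of $x$ in $G$; by the universal property, this data extends uniquely to a group homomorphism $\tilde f : F \longrightarrow E$ with $\pi \circ \tilde f$ equal to the quotient map $q : F \longrightarrow G$. Since $R = \op{ker}(q)$, we have $\tilde f(R) \subseteq A$.

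Next I would descend $\tilde f$ to the quotient $F/[F,R]R^2$. Because $A$ is central in $E$, the commutator $[F,R]$ is sent to $[E,A] = 1$; because every element of $A$ has order two, $R^2$ is sent to $A^2 = 1$. Thus $\tilde f$ factors through a group homomorphism $\bar f : F/[F,R]R^2 \longrightarrow E$, and its restriction to the kernel of the extension \eqref{gigon} defines $\varphi : R/[F,R]R^2 \longrightarrow A$, which is automatically $\FF_2$-linear since its source has exponent two.

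Finally, I would verify that the transgression of $\varphi$ is equivalent to $E$. By the construction in Lemma \ref{lemma 15}, this transgression is the extension $(A \times F/[F,R]R^2)/K \longrightarrow G$, where $K$ is the normal subgroup generated by $\{(\varphi(r), r^{-1}) : r \in R/[F,R]R^2\}$. The assignment $(a,f) \mapsto a \cdot \bar f(f)$ defines a homomorphism $A \times F/[F,R]R^2 \longrightarrow E$ that kills $K$, since $\bar f(r) = \varphi(r)$ for $r \in R/[F,R]R^2$ gives $\varphi(r) \cdot \bar f(r)^{-1} = 1$. The induced map from the transgression extension to $E$ fits into the commutative diagram \eqref{tyty} with identity maps on $A$ and on $G$, so the five lemma forces it to be an isomorphism, establishing the equivalence of extensions.

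The one point where the hypothesis is essential, and which I expect to be the main obstacle to write cleanly, is the descent step in the second paragraph: without square-centrality, $\tilde f$ would not annihilate $R^2$ and there would be no candidate $\varphi$ defined on $R/[F,R]R^2$. Once that descent is justified, the rest of the argument is diagram chasing that parallels the oriented case of Miller.
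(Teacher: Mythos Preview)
Your proposal is correct and follows essentially the same approach as the paper: lift $F\to E$ using freeness, observe that $[F,R]R^2$ is annihilated precisely because the extension is square-central, restrict to obtain $\varphi$, and identify the transgression of $\varphi$ with the original extension. The paper simply asserts this last identification, whereas you spell out the explicit isomorphism $(a,f)\mapsto a\cdot\bar f(f)$ and invoke the five lemma; that extra detail is welcome but not a different argument.
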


\begin{proof}
Let $\epsilon \in \xi^s(G,A)$ be represented by the square-central extension
$$
1 \rightarrow  A \rightarrow G^* \stackrel{\eta}{\rightarrow}  G \rightarrow   1\,.
$$
Since $F$ is free, there exists a homomorphism $f: F \to G^*$ making the following diagram commute:
$$\xymatrix{ F \ar[r]\ar[d]_f& G \ar[d]^{\op{id}} \\
G^* \ar[r]^\eta          & G      } $$    
Note that $R \subset \ker(f)$. Moreover, $[F,R]R^2 \subset \ker(f)$ because for $x \in F$ and $y \in R$, we have $f([x,y]) = [f(x), f(y)]$; here $f(y) \in \ker(\eta) = \operatorname{im}(A)$, and the square-central property of $A$ gives $[f(x), f(y)] = 1$ and $f(y)^2 = 1$. Thus $f$ induces a homomorphism $f: F/[F,R]R^2 \to G^*$, yielding a commutative diagram $$\xymatrix{1 \ar[r] & R/[F,R]R^2 \ar[r] \ar[d]_{\tilde{f}}& F/[F,R]R^2 \ar[r] \ar[d]_f& G \ar[r]\ar[d]_{\op{id}} & 1\\ 1 \ar[r] & A \ar[r] & G^* \ar[r] & G \ar[r] & 1,}$$ where $\tilde{f}: R/[F,R]R^2 \to A$ is the restriction. The transgression of $\tilde{f}$ coincides with the original class $\epsilon \in \xi^s(G,A)$, proving the lemma. 
\end{proof}
Applying the universal coefficient theorem with coefficients in $\ZZ_2$ yields
$$H^2(G;\ZZ_2)\cong H_1(G;\ZZ)\otimes \ZZ_2\oplus H_2(G;\ZZ)\otimes \ZZ_2\,.$$
As an application, we compute these groups for cyclic and dihedral groups: 
$$H^2(\ZZ_n;\ZZ_2)=\left\{\begin{array}{cc}
    0     & n \textrm{ odd}, \\\
   \ZZ_2 & n \textrm{ even}, \\
 \end{array}\right.
 \hspace{1cm}
 H^2(D_{2n};\ZZ_2)=\left\{\begin{array}{cc}
    \ZZ_2     & n \textrm{ odd}, \\\
   \ZZ_2\oplus\ZZ_2\oplus\ZZ_2 & n \textrm{ even}. \\
 \end{array}\right.
 $$
For the cyclic group $\ZZ_n = \langle x : x^n = 1 \rangle$ with $n$ even, the generator corresponds to $(x^{n/2})$; see Proposition \ref{cyclic}. For the dihedral groups $D_{2n} = \langle a,b : a^2 = b^2 = (ab)^n = 1\rangle$ with $c := ab$, when $n$ is odd the generator is $(a)$, while for $n$ even the generators are $(c^{n/2})$, $(a)$, and $(ac)$.

The symmetric groups $S_n$ satisfy
$$H^2(S_n;\ZZ_2)=\left\{
\begin{array}{cr}
    0     & n=1, \\
   \ZZ_2 & n=2,3, \\
   \ZZ_2\oplus\ZZ_2& n>3\,.\\
 \end{array}
\right.$$
If $S_n$ is generated by transpositions $(ij)$ with $i,j \in {1,\dots,n}$, then a generator is $((12))$ for $n = 2,3$, and the two generators are $((12))$ and $((12),(34))$ for $n > 3$.

\section{The unoriented Bogomolov multiplier}
\label{bogo}

The Bogomolov multiplier originally arose as an obstruction to the rationality\footnote{An algebraic variety is considered rational if it possesses a Zariski open subset isomorphic to an open subset of some projective space.} of an algebraic variety $X$ over $\mathbb{C}$.
Bogomolov \cite{Bo87} studied the quotient variety $X = V/G$, where $V$ is a faithful linear representation of a linear algebraic group $G$ over $\CC$ (we restrict to the case where $G$ is a finite group). Saltman \cite{Sal84} and Bogomolov \cite{Bo87} provided the following explicit description:
\begin{equation}
    B_0(G)=\bigcap_{A}\op{ker}\{\op{res}_G^A:H^2(G;\CC^*)\rightarrow H^2(A;\CC^*)\}\,,
\end{equation}
where $A$ ranges over all bicyclic subgroups of $G$, i.e., subgroups generated by two commuting elements.
The rationality of $V/G$ is equivalent to the condition that the field of invariants $\CC(G)^G$ is a pure transcendental extension of the constant field $\CC$; this is the classical Noether problem \cite{Sw83}.
Moravec \cite{Mo12} gave another description of $B_0(G)$, based on the bordism interpretation of the Schur multiplier due to Miller \cite{Mi52}. This description considers the quotient of $\mathcal{M}(G)$ by the subgroup generated by extendable $G$-cobordisms over the torus, or equivalently, the classes of principal $G$-bundles with base space a disjoint union of tori (free $G$-actions on a disjoint union of tori). These are extendable because the free action of $G$ extends to a (not necessarily free) action on a three-manifold. In summary, we obtain the isomorphism
\begin{equation}
    B_0(G)\cong \frac{\mathcal{M}(G)}{\langle \textrm{toral classes}\rangle}\,.
\end{equation}
Thus it is natural to define an analogous invariant in the unoriented setting as follows:
\begin{defn}
For a finite group $G$, the \textbf{unoriented Bogomolov multiplier}, denoted $B_0(G;\ZZ_2)$, is the quotient of the unoriented Schur multiplier $\mathcal{M}(G;\ZZ_2)$ by the subgroup generated by the $G$-cobordisms over the torus $\langle x,y\rangle$, the Klein bottle $(x',y')$, and the projective space $(z)$, subject to $[x,y]=1$, $\{x',y'\}=1$, and $z^2=1$.
\end{defn}

In Section \ref{hopf}, we established the Hopf formula for the unoriented Schur multiplier, yielding $\mathcal{M}(G;\ZZ_2) \cong H^2(G;\ZZ_2)$. The universal coefficient theorem then gives
$$\mathcal{M}(G;\ZZ_2)\cong (\mathcal{M}(G)\otimes \ZZ_2)\oplus (G/G'\otimes \ZZ_2)\,.$$
where the second term $G/G'\otimes \ZZ_2\cong 2\text{-}\operatorname{tor}(G/G')$ consists of the $2$-torsion elements of the abelianization of $G$.
These elements are represented by $G$-cobordisms over the Klein bottle of the form $(1,g)$ for $g\in G$. By relation \eqref{four20} in Theorem \ref{dador}, these elements reduce to the projective elements $(1,g)\sim (g)$. As shown in Proposition \ref{prC1}, such elements are extendable. Consequently, the unoriented Bogomolov multiplier reduces to the quotient of $\mathcal{M}(G)\otimes \ZZ_2$ by the subgroup generated by the toral classes. Thus we obtain the isomorphism
\begin{equation}\label{inme}
    B_0(G;\ZZ_2)\cong \frac{\mathcal{M}(G)\otimes \ZZ_2}{\langle\textrm{toral classes}\rangle}\,.
\end{equation}
An immediate consequence of this identification since $\mathcal{M}(G)\otimes \ZZ_2\cong \mathcal{M}(G)/2\mathcal{M}(G)$ is the following proposition:
\begin{prop}\label{guero} If a class $[S,G]\in B_0(G)$ is trivial, then the image on $B_0(G;\ZZ_2)$ is also trivial. 
\end{prop}

The previous discussion implies that when a group $G$ has a Schur multiplier which is an elementary abelian $2$-group, then $B_0(G) = B_0(G;\mathbb{Z}_2)$. This property holds for extraspecial $2$-groups of order $2^{2n+1}$ with $n > 1$, where the Schur multiplier is an elementary abelian $2$-group of order $2^{2n^2 - n - 1}$; see \cite[p. 138]{Kar87}. 

The following theorem, which is the unoriented counterpart of the main result in \cite{ASSU23}, will be proved in \cite{Seg}:

\begin{thm}\label{no-proof}
A free action of a finite group $G$ on an unoriented closed surface $S$ extends to a (not necessarily free) action on a $3$-manifold if and only if the class $[S,G]\in B_0(G;\ZZ_2)$ in the unoriented Bogomolov multiplier is trivial.
\end{thm}

This raises an open question about rationality in the unoriented setting, or equivalently, about the appropriate formulation of the Noether problem for which the unoriented Bogomolov multiplier $B_0(G;\ZZ_2)$ serves as an obstruction.

As a consequence of Section \ref{schur}, we obtain that $B_0(G;\ZZ_2)$ is trivial for any abelian group; see Theorem \ref{abeliano}. Similarly, Propositions \ref{cyclic} and \ref{dihedral} show that the unoriented Bogomolov multiplier is trivial for all cyclic and dihedral groups. At the end of Section \ref{hopf}, we observed that the generators of the Schur multiplier of the symmetric group are $G$-cobordisms over the projective space and the Klein bottle. Consequently, $B_0(G)$ is trivial for symmetric groups. These results constitute the unoriented counterpart of those presented in \cite{DS22}. 
Moreover, as the reader may observe, these facts follow directly from Proposition \ref{guero} and \cite{DS22}, and they can be extended to include the case of alternating groups.

\begin{thm} For finite abelian, dihedral, symmetric groups, and alternating groups the unoriented Bogomolov multiplier is trivial. 
\end{thm}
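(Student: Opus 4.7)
The plan is to show, for each of the three families, that $\mathcal{M}(G;\ZZ_2)$ is generated by classes of the three types killed in $B_0(G;\ZZ_2)$: torus classes $\langle x,y\rangle$ with $[x,y]=1$, Klein classes $(x,y)$ with $\{x,y\}=1$, and projective classes $(z)$ with $z^2=1$. Once this is verified, Definition \ref{dBog} immediately gives $B_0(G;\ZZ_2)=0$.

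For an abelian $G$, I would combine Theorem \ref{abeliano} with Theorem \ref{main}: Theorem \ref{abeliano} shows that every $G$-cobordism over an unoriented closed surface extends to a 3-manifold, the only exception being the trivial $G$-bundle over $\RR P^2$, and by Theorem \ref{main} every extendable class has trivial image in $B_0(G;\ZZ_2)$; the trivial $\RR P^2$-bundle is itself a projective class $(1)$ and is killed in $B_0$ by definition, so $B_0(G;\ZZ_2)=0$. A more algebraic route is to extract from the proof of Theorem \ref{abeliano} an explicit decomposition: any class is represented by a $G$-bundle over a surface $S\cong nT^2$, $\RR P^2\#nT^2$, or $K\#nT^2$, and since every separating curve has monodromy in $[G,G]=1$, one cuts and caps with discs to write the class as a sum of bundles over elementary pieces of the three prescribed types.

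For the dihedral case, Proposition \ref{dihedral} identifies multiplicative generators of $\mathcal{M}(D_{2n};\ZZ_2)$ as $(ac^i)$ for $1\le i\le n$, together with $(c^{n/2})$ when $n$ is even. Using the dihedral identity $c^ia=ac^{-i}$ (an immediate consequence of $ac^i=c^{-i}a$), one computes $(ac^i)^2=a(c^ia)c^i=a(ac^{-i})c^i=a^2=1$, and $(c^{n/2})^2=c^n=1$, so every generator is a projective class. For the symmetric case, the concluding paragraph of Section \ref{hopf} records generators of $\mathcal{M}(S_n;\ZZ_2)\cong H^2(S_n;\ZZ_2)$: the element $((12))$ alone for $n=2,3$, and $((12))$ together with $((1,2),(3,4))$ for $n>3$. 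The element $((12))$ is a projective class since $(12)^2=1$; the element $((1,2),(3,4))$ is a Klein class because the disjoint transpositions commute, so $\{(1,2),(3,4)\}=(3,4)^2=1$. Thus every listed generator is killed in $B_0$.

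The main subtlety lies in the abelian case, where one must reconcile Theorem \ref{abeliano} with the fact that the trivial $G$-bundle over $\RR P^2$ is not extendable in the strict sense; this is absorbed by observing that this class is a projective class and is therefore quotiented away in the definition of $B_0$. The dihedral and symmetric cases reduce to short verifications on the already-tabulated generator sets, the only concern being to confirm each generator is of the claimed projective or Klein type.
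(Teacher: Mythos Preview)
Your proof is correct and follows essentially the same approach as the paper: invoke Theorem~\ref{abeliano} for the abelian case, and use the explicit generator lists from Proposition~\ref{dihedral} and the end of Section~\ref{hopf} for the dihedral and symmetric cases, checking that each generator is a projective or Klein class. You are in fact slightly more careful than the paper in verifying explicitly that $(ac^i)^2=1$, $(c^{n/2})^2=1$, and $\{(1,2),(3,4)\}=1$, and in handling the subtlety about the trivial $G$-bundle over $\RR P^2$ (which, being $(1)$, is already trivial in $\mathcal{M}(G;\ZZ_2)$ by relation~\eqref{four18}).
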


We conclude this section by analyzing an example where the unoriented Bogomolov multiplier is nontrivial.
In \cite{ASSU23}, two groups of orders $64=2^6$ and $243=3^5$ are shown to have nontrivial Bogomolov multiplier. The group of order $243$ is Samperton's counterexample to the free extension conjecture; see \cite{Sam22}. Since any group of odd order has trivial $H^2(G;\ZZ_2)$, Samperton's counterexample becomes extendable when considering actions on unoriented $3$-manifolds.

\begin{example}\label{64}
Consider the semidirect product $\ZZ_8\rtimes Q_8$, where $Q_8$ denotes the quaternion group. Recall the presentations $Q_8=\langle a,b \mid a^2=b^2,; aba^{-1}=b^{-1}\rangle$ and $\ZZ_8=\langle c \mid c^8=1\rangle$. The action of $Q_8$ on $\ZZ_8$ is given by
$$a\cdot c=c^3,\hspace{0.3cm}b\cdot c=c^5,\hspace{0.3cm}(ab)\cdot c=c^7\,.$$
Thus $\ZZ_8\rtimes Q_8$ admits the presentation
$$\left\langle a,b,c: a^2=b^2,aba^{-1}=b^{-1},c^8=1,aca^{-1}=c^3,bcb^{-1}=c^5\right\rangle\,.$$
This group is identified as $\mathsf{SmallGroup(64,182)}$ in \cite{gap}.
The generator of the Bogomolov multiplier $B_0(\ZZ_8\rtimes Q_8)$ is \begin{equation}\label{nogen}\langle a,c\rangle \langle ab,c\rangle\,,\end{equation}
where $[a,c][ab,c]=aca^{-1}c^{-1}(ab)c(ab)^{-1}c^{-1}=c^3c^{-1}c^7c^{-1}=1$. 
The integral homology groups are
$$H_1(\ZZ_8\rtimes Q_8;\ZZ)=\ZZ_2\oplus \ZZ_2\oplus\ZZ_2\,,\hspace{1cm}H_2(\ZZ_8\rtimes Q_8;\ZZ)=\ZZ_2\,.$$
Using \cite{gap}, one can verify that the first integral homology are the image of two subgroups isomorphic to $\ZZ_4$ and one $\ZZ_8$. Moreover, the image of the second integral homology of every proper subgroup is trivial. 
Consequently, the class \eqref{nogen} remains nontrivial in the unoriented Bogomolov multiplier $B_0(\ZZ_8\rtimes Q_8;\ZZ_2)$. The reader may observe that, by applying the isomorphism \eqref{inme} and that $B_0(G)$ is an elementary abelian 2-group, one immediately concludes that the unoriented Bogomolov multiplier is nontrivial.
\end{example}

During the preparation of this work the authors used the AI deepseek in order to revise the English grammar and style of the article. After using this AI deepseek, the authors reviewed and edited the content as needed and take full responsibility for the content of the published article.

\end{document}